\newtheorem{definition}{Definition}[section]
\newtheorem{theorem}[definition]{Theorem}
\newtheorem{lemma}[definition]{Lemma}
\newtheorem{proposition}[definition]{Proposition}
\newtheorem{corollary}[definition]{Corollary}
\theoremstyle{definition}
\newtheorem{remark}[definition]{Remark}
\newtheorem{example}[definition]{Example}
\numberwithin{equation}{section}
 \DeclareMathOperator{\card}{card}
\begin{document}
\begin{center}
{\Large\bf Ultrametricity and metric betweenness in tangent spaces
to metric spaces}
\end{center}

\bigskip
\begin{center}
{\bf O. Dovgoshey and D. Dordovskyi}
\end{center}

\begin{abstract}
The paper deals with pretangent spaces to general metric spaces. An
ultrametricity criterion for pretangent spaces is found and it is
closely related to the metric betweenness in the pretangent spaces.
\end{abstract}

\bigskip
{\bf Mathematics Subject Classification (2000):} 54E35.

\bigskip
{\bf Key words:} Metric spaces; Tangent spaces; Ultrametric
spaces; Metric betweenness.

\section{Introduction}
\normalsize

Analysis on metric spaces with no a priori smooth structure is in
need of some generalized differentiations. Important examples of
such generalizations and even an axiomatics  of so-called
"pseudo-gradients" can be found in \cite{Am1,AK1,AK2,Ch,Haj,HeKo,Sh}
and, respectively, in \cite{Am2}. A linear structure, and so a
differentiation, for separable metric spaces can be obtained via
their isometric embeddings into dual spaces of separable Banach
spaces. For the application of this approach to develop a rather
complete theory of rectifiable sets and currents on metric spaces
see \cite{AK1,AK2}. Another natural way to obtain suitable
differentiations on metric spaces is to induce some tangents at the
points of these space. The Gromov--Hausdorff convergence and the
ultra-convergence are, probably, the most widely applied today's
tools for the construction of such tangent spaces (see, for example,
\cite{BBI,BGP} and, respectively, \cite{BrHa,Lyt}). Recently a new
approach to the introduction of the tangent spaces at the points of
general metric spaces was proposed in \cite{DM2}. Our paper is
devoted to the study of the last tangent spaces. We find necessary
and sufficient conditions under which the tangent spaces are
ultrametric, see Theorem \ref{2:T2.10} below. Our second main result
is Theorem \ref{3:T3.2} that completely describes metric spaces for
which tangents are, roughly speaking, the snowflaked versions of
subsets of $\mathbb{R}$. For convenience we recall the main notions
from \cite{DM2}, see also \cite{Dov}.

Let $(X,d)$ be a metric space. Fix a sequence  $\tilde{r}$ of
positive real numbers $r_n$ which tend to zero. In what follows
this sequence $\tilde{r}$ will be called a {\it normalizing
sequence}. Let us denote by $\tilde{X}$ the set of all sequence of
points from $X$.

\begin{definition}\label{1:d1.1}
Two sequence $\tilde{x},\tilde{y}\in\tilde{X}$,
$\tilde{x}=\{x_n\}_{n\in\mathbb{N}}$ and
$\tilde{y}=\{y_n\}_{n\in\mathbb{N}}$ are mutually stable with
respect to (w.r.t.) a normalizing sequence
$\tilde{r}=\{r_n\}_{n\in\mathbb{N}}$ if there is a finite limit
\begin{equation}\label{1:eq1.1}
\underset{n\rightarrow\infty}{\lim}\frac{d(x_n,y_n)}{r_n}=:
\tilde{d}_{\tilde{r}}(\tilde{x},\tilde{y})=
\tilde{d}(\tilde{x},\tilde{y}).
\end{equation}
\end{definition}

We shall say that a family $\tilde{F}\subseteq\tilde{X}$ is {\it
self-stable} (w.r.t. $\tilde{r}$) if every two
$\tilde{x},\tilde{y}\in\tilde{F}$ are mutually stable. A family
$\tilde{F}\subseteq\tilde{X}$ is {\it maximal self-stable} if
$\tilde{F}$ is self-stable and for an arbitrary
$\tilde{z}\in\tilde{X}\setminus\tilde{F}$ there is
$\tilde{x}\in\tilde{F}$ such that $\tilde{x}$ and $\tilde{z}$ are
not mutually stable.

A standard application of Zorn's lemma leads to the following

\begin{proposition}\label{1:p1.2}
Let $(X,d)$ be a metric space and let $p\in X$. Then for every
normalizing sequence $\tilde{r}=\{r_n\}_{n\in\mathbb{N}}$ there
exists a maximal self-stable family $\tilde{X}_{p,\tilde{r}}$ such
that $\tilde{p}=\{p,p,...\}\in\tilde{X}_{p,\tilde{r}}$.
\end{proposition}

Consider a function
$\tilde{d}:\tilde{X}_{p,\tilde{r}}\times\tilde{X}_{p,\tilde{r}}\rightarrow\mathbb{R}$
where
$\tilde{d}(\tilde{x},\tilde{y})=\tilde{d}_{\tilde{r}}(\tilde{x},\tilde{y})$
is defined by \eqref{1:eq1.1}. Obviously, $\tilde{d}$ is symmetric
and nonnegative. Moreover, the triangle inequality implies
$$
\tilde{d}(\tilde{x},\tilde{y})\leq
\tilde{d}(\tilde{x},\tilde{z})+\tilde{d}(\tilde{z},\tilde{y})
$$
for all $\tilde{x},\tilde{y},\tilde{z}$ from
$\tilde{X}_{p,\tilde{r}}$. Hence
$(\tilde{X}_{p,\tilde{r}},\tilde{d})$ is a pseudometric space.

Define a relation $\sim$ on $\tilde{X}_{p,\tilde{r}}$ by
$\tilde{x}\sim\tilde{y}$ if and only if
$\tilde{d}_{\tilde{r}}(\tilde{x},\tilde{y})=0$. Then $\sim$ is an
equivalence relation. Let us denote by
$\Omega_{p,\tilde{r}}^X=\Omega_{p,\tilde{r}}$ the set of
equivalence classes in $\tilde{X}_{p,\tilde{r}}$ under the
equivalence relation $\sim$. If a function $\rho$ is defined on
$\Omega_{p,\tilde{r}}\times\Omega_{p,\tilde{r}}$ by
\begin{equation}\label{1:eq1.2}
\rho(\alpha,\beta)=\tilde{d}(\tilde{x},\tilde{y})
\end{equation}
for $\tilde{x}\in\alpha$ and $\tilde{y}\in\beta$, then $\rho$ is
the well-defined metric on $\Omega_{p,\tilde{r}}$. The metric
identification of $(\tilde{X}_{p,\tilde{r}},\tilde{d})$ is, by
definition, the metric space $(\Omega_{p,\tilde{r}},\rho)$.

\begin{definition}\label{1:d1.3}
The space $(\Omega_{p,\tilde{r}}^X,\rho)$ is pretangent to the
space $X$ at the point $p$ w.r.t. a normalizing sequence
$\tilde{r}$.
\end{definition}

Note that $\Omega_{p,\tilde{r}}\neq\varnothing$ because the
constant sequence $\tilde{p}$ belongs to
$\tilde{X}_{p,\tilde{r}}$, see Proposition \ref{1:p1.2}.

Let $\{n_k\}_{k\in\mathbb{N}}$ be an infinite, strictly increasing
sequence of natural numbers. Let us denote by $\tilde{r}'$ the
subsequence $\{r_{n_k}\}_{k\in\mathbb{N}}$ of the normalizing
sequence $\tilde{r}=\{r_{n}\}_{n\in\mathbb{N}}$ and let
$\tilde{x}':=\{x_{n_k}\}_{k\in\mathbb{N}}$ for every
$\tilde{x}=\{x_n\}_{n\in\mathbb{N}}\in\tilde{X}$. It is clear that
if $\tilde{x}$ and $\tilde{y}$ are mutually stable w.r.t.
$\tilde{r}$, then $\tilde{x}'$ and $\tilde{y}'$ are mutually
stable w.r.t. $\tilde{r}'$ and
\begin{equation}\label{1:eq1.3}
\tilde{d}_{\tilde{r}}(\tilde{x},\tilde{y})=\tilde{d}_{\tilde{r}'}(\tilde{x}',\tilde{y}').
\end{equation}

If $\tilde{X}_{p,\tilde{r}}$ is a maximal self-stable (w.r.t.
$\tilde{r}$) family, then, by Zorn's lemma, there exists a maximal
self-stable (w.r.t. $\tilde{r}'$) family
$\tilde{X}_{p,\tilde{r}'}$ such that
$$
\{\tilde{x}':\tilde{x}\in\tilde{X}_{p,\tilde{r}}\}\subseteq\tilde{X}_{p,\tilde{r}'}.
$$

Denote by $in_{\tilde{r}'}$ the mapping from
$\tilde{X}_{p,\tilde{r}}$ to $\tilde{X}_{p,\tilde{r}'}$ with
$in_{\tilde{r}'}(\tilde{x})=\tilde{x}'$ for all
$\tilde{x}\in\tilde{X}_{p,\tilde{r}}$. It follows from
\eqref{1:eq1.3} that, after the metric identifications,
$in_{\tilde{r}'}$ pass to an isometric embedding
$em':\Omega_{p,\tilde{r}}^X\rightarrow\Omega_{p,\tilde{r}'}^X$
under which the diagram
\begin{equation}\label{1:eq1.4}
\begin{diagram}
\node{\tilde X_{p,\tilde r}}\arrow[2]{e,t}{\text{in}_{\tilde r'}}
                       \arrow[2]{s,l}{\pi}\node[2]{\tilde X_{p,\tilde
                       r'}}
                       \arrow[2]{s,r}{\pi'}\\ \\
\node{\Omega_{p,\tilde
r}^X}\arrow[2]{e,t}{em'}\node[2]{\Omega_{p,\tilde r'}^X}
\end{diagram}
\end{equation}
is commutative. Here $\pi,\pi'$ are canonical projection maps,
$\pi(\tilde{x}):=\{\tilde{y}\in~\tilde{X}_{p,\tilde{r}}:\tilde{d}_{\tilde{r}}(\tilde{x},\tilde{y})=0\}$
and
$\pi'(\tilde{x}):=\{\tilde{y}\in\tilde{X}_{p,\tilde{r}'}:\tilde{d}_{\tilde{r}'}(\tilde{x},\tilde{y})=0\}$.

Let $X$ and $Y$ be two metric spaces. Recall that a map
$f:X\rightarrow Y$ is called an {\it isometry} if $f$ is
distance-preserving and onto.

\begin{definition}\label{1:d1.4}
A pretangent $\Omega_{p,\tilde{r}}^X$ is tangent if
$em':\Omega_{p,\tilde{r}}^X\rightarrow\Omega_{p,\tilde{r}'}^X$ is
an isometry for every $\tilde{X}_{p,\tilde{r}'}$.
\end{definition}

\section{Betweenness exponent and ultrametricity of pretangent spaces}
Let $(X,d)$ be a metric spaces. Denote by $t_0=t_0(X)=t_0(X,d)$
the supremum of positive numbers $t$ for which the function
$(x,y)\mapsto(d(x,y))^t$ is a metric on $X$. It is clear that
$d^{t_0}$ remains a metric if $t_0=t_0(X)<\infty$. The quantity
$t_0(X)$ will be called the {\it betweenness exponent} of the
metric space $(X,d)$. The proofs of the following two lemmas can
be found in \cite{DM1}.

\begin{lemma}\label{2:L2.1}
Let $x,y$ and $z$ be points in a metric space $X$. If the
inequality
\begin{equation}\label{2:eq2.1}
d(x,z)\vee d(z,y)<d(x,y)
\end{equation}
holds, then there exists a unique solution $s_0\in[1,\infty)$ of
the equation
\begin{equation}\label{2:eq2.2}
(d(x,z))^s+(d(z,y))^s=(d(x,y))^s.
\end{equation}
\end{lemma}

For points $x,y$ and $z$ in $X$ write
\begin{equation}\label{2:eq2.3}
s(x,y,z):=
\begin{cases}
s_0 \  \  \  \  \text{ if (\ref{2:eq2.1}) holds},\\
+\infty   \  \  \text{ if (\ref{2:eq2.1}) does not hold}
\end{cases}
\end{equation}
where $s_0$ is the unique root of equation \eqref{2:eq2.2}.

\begin{lemma}\label{2:L2.2}
The equality
$$
t_0(X)=\inf\{s(x,y,z):x,y,z\in X\}
$$
holds for every nonvoid metric space $X$.
\end{lemma}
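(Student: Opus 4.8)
The plan is to reduce the statement about $t_0(X)$ to a triple-by-triple analysis of the triangle inequality for the power $d^t$. Since $d$ is a metric and $t>0$, the function $(x,y)\mapsto (d(x,y))^t$ is automatically symmetric and vanishes exactly on the diagonal, so $d^t$ is a metric precisely when it satisfies the triangle inequality
$$(d(x,y))^t\le (d(x,z))^t+(d(z,y))^t$$
for every triple $x,y,z\in X$. Writing $a=d(x,z)$, $b=d(z,y)$, $c=d(x,y)$, the problem therefore decouples over triples: $t_0(X)$ is the supremum of those $t>0$ for which $c^t\le a^t+b^t$ holds simultaneously for all triples.

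First I would treat a single triple. If inequality \eqref{2:eq2.1} fails, i.e. $c\le a\vee b$, then $c^t\le (a\vee b)^t\le a^t+b^t$ for every $t>0$, so this triple imposes no restriction on $t$; this is exactly the case $s(x,y,z)=+\infty$. If instead \eqref{2:eq2.1} holds, then $0<a,b<c$, and I would introduce $g(s):=(a/c)^s+(b/c)^s$. Since $a/c,b/c\in(0,1)$, the function $g$ is continuous and strictly decreasing on $(0,\infty)$, with $g(1)=(a+b)/c\ge 1$ by the triangle inequality for $d$ and $g(s)\to 0$ as $s\to\infty$. By Lemma~\ref{2:L2.1} the root $s_0=s(x,y,z)$ is the unique value with $g(s_0)=1$, and monotonicity gives $g(t)\ge 1\Leftrightarrow t\le s_0$. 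Since $c^t\le a^t+b^t$ is equivalent to $g(t)\ge 1$, this triple admits $t$ if and only if $t\le s(x,y,z)$.

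Combining the two cases, $d^t$ is a metric if and only if $t\le s(x,y,z)$ for every triple $x,y,z$, which in turn is equivalent to $t\le \inf\{s(x,y,z):x,y,z\in X\}$. Taking the supremum over all admissible $t$ then yields $t_0(X)=\inf\{s(x,y,z):x,y,z\in X\}$. Note that the set of admissible $t$ is always an interval of the form $(0,S]$ with $S$ this infimum, so $t_0(X)=S$ and, consistently with the remark preceding the lemma, $d^{t_0}$ is itself a metric when $t_0<\infty$.

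I expect the argument to be short once Lemma~\ref{2:L2.1} is available: the real content is the per-triple equivalence $c^t\le a^t+b^t\Leftrightarrow t\le s(x,y,z)$, and the main point to get right is the strict monotonicity of $g$ together with the boundary value $g(1)\ge 1$ that forces $s_0\ge 1$. The passage from all triples to the infimum is purely formal, since a real number lies below every member of a set exactly when it lies below that set's infimum; this also transparently handles the ultrametric case, where no triple satisfies \eqref{2:eq2.1} and both sides of the asserted identity equal $+\infty$.
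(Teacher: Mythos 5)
Your proof is correct and complete: the reduction of the triangle inequality for $d^t$ to the per-triple condition, the equivalence $c^t\le a^t+b^t\Leftrightarrow g(t)\ge 1\Leftrightarrow t\le s(x,y,z)$ via strict monotonicity of $g(s)=(a/c)^s+(b/c)^s$, and the formal passage to the infimum (including attainment, so that $d^{t_0}$ is itself a metric when $t_0<\infty$) are all sound, and degenerate triples and the ultrametric case are handled consistently with the convention $s=+\infty$. Note that the paper itself gives no proof of this lemma --- it cites \cite{DM1} for both Lemma \ref{2:L2.1} and Lemma \ref{2:L2.2} --- so your argument cannot be compared line by line with one in the text; it is, however, the natural per-triple decoupling argument, and it supplies a self-contained proof of exactly what is claimed.
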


Recall that a metric space $(X,d)$ is {\it ultrametric} if the
metric $d$ satisfies the {\it ultra-triangle inequality}
$d(x,y)\leqslant d(x,z)\vee d(y,z)$ for all $x,y,z\in~X$.

\begin{remark}\label{2:r2.4}
For ultrametric spaces $X$ we have the equality $t_0(X)=\infty$
since inequality \eqref{2:eq2.1} never holds in these spaces. In
fact $t_0(X)=\infty$ is true if and only if $X$ is ultrametric.
\end{remark}

For every metric space $(Y,d)$ define the subset $Y^{+3}$ of the
Cartesian product $Y\times Y\times Y$ by the rule
\begin{equation}\label{2:eq2.4}
(x,y,z)\in Y^{+3} \Leftrightarrow d(x,z)\geqslant d(x,y)\geqslant
d(y,z)>0.
\end{equation}
It is clear that $Y^{+3}\neq\varnothing$ if and only if $\card
Y\geqslant 3$.

\begin{proposition}\label{2:as2.5}
Let $(X,d)$be a metric space and let $p$ be a limit point of $X$.
If the equality
\begin{equation}\label{2:eq2.5}
\underset{\underset{(x,y,z)\in X^{+3}}{x,y,z\rightarrow
p}}{\lim}s(x,z,y)=s_0\in [1,\infty],
\end{equation}
holds, then for every
$\Omega_{p,\tilde{r}}=\Omega_{p,\tilde{r}}^X$ and all
$(\beta,\gamma,\delta)\in\Omega^{+3}_{p,\tilde{r}}$ we have the
equality
\begin{equation}\label{2:eq2.6}
\rho(\beta,\delta)=\left((\rho(\beta,\gamma))^{s_0}+(\rho(\delta,\gamma))^{s_0}
\right)^\frac{1}{s_0}
\end{equation}
with
$$
\left((\rho(\beta,\gamma))^{s_0}+(\rho(\delta,\gamma))^{s_0}
\right)^\frac{1}{s_0}:=\underset{t\rightarrow\infty}{\lim}
\left((\rho(\beta,\gamma))^t+(\rho(\delta,\gamma))^t
\right)^\frac{1}{t}=\rho(\beta,\gamma)\vee\rho(\delta,\gamma)
$$
if $s_0=\infty$.
\end{proposition}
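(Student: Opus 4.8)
The plan is to pass from the pretangent space back to $X$ by choosing representative sequences, to sort the three points of each triple so that hypothesis \eqref{2:eq2.5} applies, and then to let $n\to\infty$ in the defining relation \eqref{2:eq2.2}. Fix $(\beta,\gamma,\delta)\in\Omega^{+3}_{p,\tilde r}$ and set $a:=\rho(\beta,\gamma)$, $b:=\rho(\gamma,\delta)$, $c:=\rho(\beta,\delta)$, so that $c\geqslant a\geqslant b>0$ by \eqref{2:eq2.4}. Choose representatives $\tilde x=\{x_n\}\in\beta$, $\tilde y=\{y_n\}\in\gamma$, $\tilde z=\{z_n\}\in\delta$. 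Since each of these is mutually stable with the constant sequence $\tilde p\in\tilde X_{p,\tilde r}$ (Proposition \ref{1:p1.2}) and $r_n\to 0$, we get $d(x_n,p),d(y_n,p),d(z_n,p)\to 0$, that is $x_n,y_n,z_n\to p$. Moreover $d(x_n,y_n)/r_n\to a$, $d(y_n,z_n)/r_n\to b$ and $d(x_n,z_n)/r_n\to c$ by \eqref{1:eq1.1}.

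For all large $n$ the three distances are positive, so $x_n,y_n,z_n$ are pairwise distinct and can be relabelled as $(u_n,v_n,w_n)\in X^{+3}$; write $L_n:=d(u_n,w_n)$, $P_n:=d(u_n,v_n)$, $Q_n:=d(v_n,w_n)$, which are precisely the maximum, the median and the minimum of the three original distances. Since the maximum, the median and the minimum are continuous functions, $L_n/r_n\to c$, $P_n/r_n\to a$ and $Q_n/r_n\to b$. All coordinates of $(u_n,v_n,w_n)$ tend to $p$, so hypothesis \eqref{2:eq2.5} yields $\sigma_n:=s(u_n,w_n,v_n)\to s_0$. By the symmetry of $s$ in its first two arguments this value does not depend on the order in which the two endpoints of the longest side are listed, and when $L_n>P_n$ the inequality \eqref{2:eq2.1} holds, so $\sigma_n$ is the root of \eqref{2:eq2.2}, namely $P_n^{\sigma_n}+Q_n^{\sigma_n}=L_n^{\sigma_n}$.

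If $s_0<\infty$, then $\sigma_n<\infty$ for all large $n$, which forces $L_n>P_n\vee Q_n$; dividing the above equation by $r_n^{\sigma_n}$ gives $(P_n/r_n)^{\sigma_n}+(Q_n/r_n)^{\sigma_n}=(L_n/r_n)^{\sigma_n}$. Letting $n\to\infty$ and using the continuity of $(\xi,\eta,s)\mapsto\xi^{s}+\eta^{s}$ at positive $\xi,\eta$ yields $a^{s_0}+b^{s_0}=c^{s_0}$, which is \eqref{2:eq2.6}. If $s_0=\infty$ I will instead show $c=a$, so that the right-hand side of \eqref{2:eq2.6} equals $a\vee b=a=c$ under the stated limit convention. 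Here either $L_n=P_n$ for all large $n$, whence $c=a$ on passing to the limit; or $L_n>P_n$ for infinitely many $n$, and along this subsequence $\big((P_n/r_n)^{\sigma_n}+(Q_n/r_n)^{\sigma_n}\big)^{1/\sigma_n}=L_n/r_n$, whose left-hand side tends to $a\vee b=a$ because $\sigma_n\to\infty$ and $P_n/r_n\to a$, $Q_n/r_n\to b\leqslant a$, while the right-hand side tends to $c$; again $c=a$.

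The main obstacle is the degeneracy caused by ties among the three distances: when the longest side is not strictly longest, \eqref{2:eq2.1} fails and \eqref{2:eq2.2} is unavailable, so the equation can only be read off along the indices with $L_n>P_n$, the ambiguous indices forcing $\sigma_n=+\infty$ and hence $s_0=\infty$. Reducing everything to the continuous quantities $L_n,P_n,Q_n$ rather than to a fixed labelling of $x_n,y_n,z_n$ is what lets the same computation cover all the limiting configurations $c\geqslant a\geqslant b$ uniformly, and the case $s_0=\infty$ is exactly where the limit convention attached to \eqref{2:eq2.6} is indispensable.
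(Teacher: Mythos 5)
Your proof is correct and takes essentially the same approach as the paper's: choose representative sequences, pass to the nonincreasing rearrangement of the three distances so that the relabelled triple lies in $X^{+3}$ (your continuity of max/median/min is exactly the paper's Remark \ref{2:r2.6}), and let $n\to\infty$ in the defining equation \eqref{2:eq2.2}. The only variation is in the case $s_0=\infty$, where the paper bounds $\rho(\beta,\delta)$ above by $\left((\rho(\beta,\gamma))^M+(\rho(\gamma,\delta))^M\right)^{1/M}$ for arbitrary $M$ and then lets $M\to\infty$, while you prove $\rho(\beta,\delta)=\rho(\beta,\gamma)$ directly via the tie/no-tie dichotomy; both arguments are sound, and your explicit handling of the tie $L_n=P_n$ makes precise a point the paper leaves implicit.
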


\begin{proof}
Let $(\beta,\gamma,\delta)\in\Omega^{+3}_{p,\tilde{r}}$ where
$\tilde{r}=\{r_n\}_{n\in\mathbb{N}}$ and let
$\{x_n\}_{n\in\mathbb{N}}\in\beta$,
$\{y_n\}_{n\in\mathbb{N}}\in\gamma$,
$\{z_n\}_{n\in\mathbb{N}}\in\delta$. Let
$(d^n_{[1]},d^n_{[2]},d^n_{[3]})$ be a nonincreasing rearrangement
of the vector $d^n:=(d(x_n,z_n),d(x_n,y_n),d(y_n,z_n))$ i.e., the
vectors $d^n$ and $(d^n_{[1]},d^n_{[2]},d^n_{[3]})$ have the same
components but $d^n_{[1]}\geqslant d^n_{[2]}\geqslant d^n_{[3]}$.
It is easy to see that there is a rearrangement
$(x^*_n,y^*_n,z^*_n)$ of $(x_n,y_n,z_n)\in X^3$ such that
$$
(d^n_{[1]},d^n_{[2]},d^n_{[3]})=(d(x^*_n,z^*_n),d(x^*_n,y^*_n),d(y^*_n,z^*_n)).
$$
The last equality and the relation
$(\beta,\gamma,\delta)\in\Omega^{+3}_{p,\tilde{r}}$ imply
$$
(x^*_n,y^*_n,z^*_n)\in X^{+3}
$$
if $n$ is taken large enough. Indeed, it is sufficient to show
that $d^n_{[3]}>0$ which follows from
$$
\begin{array}{l}
0<\rho(\gamma,\delta)=(\rho(\gamma,\beta)\wedge\rho(\gamma,\delta)\wedge\rho(\beta,\delta))\\
\quad
=\underset{n\rightarrow\infty}{\lim}\frac{1}{r_n}(d(x_n,z_n)\wedge
d(x_n,y_n)\wedge d(y_n,z_n))
=\underset{n\rightarrow\infty}{\lim}\frac{d^n_{[3]}}{r_n}.
\end{array}
$$
Consider first the case $s_0<\infty$. Write
$s_n:=s(x_n^*,z_n^*,y_n^*)$ where the function $s$ is define by
\eqref{2:eq2.3}. Now using \eqref{2:eq2.5} and \eqref{2:eq2.6} we
obtain
\begin{equation}\label{2:eq2.9}
\begin{array}{l}
\rho(\beta,\delta)=\underset{n\rightarrow\infty}{\lim}\frac{1}{r_n}(d(x_n,z_n)\vee
d(x_n,y_n)\vee d(y_n,z_n))
=\underset{n\rightarrow\infty}{\lim}\frac{d(x_n^*,z_n^*)}{r_n}\\
\qquad\quad=\underset{n\rightarrow\infty}{\lim}\left(\left(\frac{d(x_n^*,y_n^*)}{r_n}\right)^{s_n}+
\left(\frac{d(y_n^*,z_n^*)}{r_n}\right)^{s_n}\right)^\frac{1}{s_n}=
\underset{n\rightarrow\infty}{\lim}\left(\left(\frac{d^n_{[2]}}{r_n}\right)^{s_n}+
\left(\frac{d^n_{[3]}}{r_n}\right)^{s_n}\right)^\frac{1}{s_n},
\end{array}
\end{equation}
that implies equality \eqref{2:eq2.6} for $s_0<\infty$.

Suppose now that $s_0=\infty$. Let $M$ be an arbitrary positive
constant. Since the function $f(t)=(a^t+b^t)^{\frac{1}{t}}$,
$a,b\in(0,\infty)$, is strictly decreasing in $t\in(0,\infty)$,
see Remark \ref{2:r2.8} below, and since equality \eqref{2:eq2.5}
holds with $s_0=\infty$, we obtain the inequality
$$
\frac{d(x^*_n,z^*_n)}{r_n}\leqslant
\left(\left(\frac{d(x^*_n,y^*_n)}{r_n}\right)^M+
\left(\frac{d(y^*_n,z^*_n)}{r_n}\right)^M\right)^{\frac{1}{M}}
$$
for sufficiently large $n$. Consequently,
\begin{equation}\label{2:eq2.8}
\begin{array}{l}
\rho(\beta,\delta)=\underset{n\rightarrow\infty}{\lim}\frac{d(x^*_n,z^*_n)}{r_n}\\\qquad\quad\leqslant
\underset{n\rightarrow\infty}{\lim}\left(\left(\frac{d(x^*_n,y^*_n)}{r_n}\right)^M+
\left(\frac{d(y^*_n,z^*_n)}{r_n}\right)^M\right)^{\frac{1}{M}}\leqslant
\left((\rho(\beta,\gamma))^M+(\rho(\gamma,\delta))^M\right)^{\frac{1}{M}}.
\end{array}
\end{equation}

Letting $M\rightarrow\infty$ we have
$\rho(\beta,\delta)\leqslant\rho(\beta,\gamma)\vee\rho(\gamma,\delta)$.
The reverse inequality follows from the supposition
$(\beta,\gamma,\delta)\in\Omega^{+3}_{p,\tilde{r}}$ by
\eqref{2:eq2.4}. Thus \eqref{2:eq2.6} holds for all
$s_0\in[1,\infty]$.
\end{proof}

\begin{remark}\label{2:r2.6}
Limit calculations in  \eqref{2:eq2.9} -- \eqref{2:eq2.8} are based
on the following simple fact. If a sequence of vectors
$(x_1^n,x_2^n,x_3^n)$ tends to the vector $(x_1,x_2,x_3)$, then the
sequence of their nonincreasing rearrangements
$(x_{[1]}^n,x_{[2]}^n,x_{[3]}^n)$ tends to the rearrangement
$(x_{[1]},x_{[2]},x_{[3]})$. Indeed, the classical
Hardy--Littlewood--Polya inequality
$$
\underset{i=1}{\overset{m}{\sum}}a_ib_i\leqslant
\underset{i=1}{\overset{m}{\sum}}a_{[i]}b_{[i]},\quad
a_i,b_i\in\mathbb{R},\,1\leqslant i\leqslant m,
$$
see, for example, \cite[Chapter 6, A. 3]{MO}, has as a consequence
the estimation
$$
\underset{i=1}{\overset{3}{\sum}}(x_{[i]}-x_{[i]}^n)^2\leqslant
\underset{i=1}{\overset{3}{\sum}}(x_i-x_i^n)^2.
$$
\end{remark}

\begin{corollary}\label{2:c2.7}
If the metric space $(X,d)$ is ultrametric, then all pretangent
spaces $\Omega^X_{p,\tilde{r}}$ are ultrametric for each $p\in X$.
\end{corollary}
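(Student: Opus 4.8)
The plan is to reduce the ultra-triangle inequality in an arbitrary pretangent space to the single equality furnished by Proposition~\ref{2:as2.5}, exploiting the fact from Remark~\ref{2:r2.4} that the ultrametricity of $X$ forces the betweenness data to degenerate to $s_0=\infty$.

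First I would fix a pretangent space $\Omega_{p,\tilde r}^X$ together with three classes $\alpha,\beta,\gamma$, the target being $\rho(\alpha,\beta)\leqslant\rho(\alpha,\gamma)\vee\rho(\beta,\gamma)$. If two of the three classes coincide the inequality is trivial, so I may assume the three pairwise $\rho$-distances are all strictly positive. Relabelling the classes as $(\beta',\gamma',\delta')$ so that $\rho(\beta',\delta')\geqslant\rho(\beta',\gamma')\geqslant\rho(\gamma',\delta')>0$, I obtain $(\beta',\gamma',\delta')\in\Omega^{+3}_{p,\tilde r}$ in the sense of \eqref{2:eq2.4}; such a relabelling is always possible, since one may first choose $\beta',\delta'$ to realise the largest distance and then, if necessary, interchange them.

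Next I would check that Proposition~\ref{2:as2.5} applies. The mere existence of three distinct classes forces $p$ to be a limit point of $X$: were $p$ isolated, every sequence mutually stable with $\tilde p$ would satisfy $d(x_n,p)\to 0$ (because $r_n\to 0$), hence would eventually fall into an isolating ball and coincide with $p$, so $\Omega_{p,\tilde r}^X$ would collapse to a single point and could not contain three distinct classes. With $p$ a limit point I invoke Remark~\ref{2:r2.4}: since $X$ is ultrametric, inequality \eqref{2:eq2.1} never holds in $X$, whence $s(x,z,y)=+\infty$ on all of $X^{+3}$ and the limit \eqref{2:eq2.5} exists trivially with $s_0=\infty$. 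Proposition~\ref{2:as2.5} then gives $\rho(\beta',\delta')=\rho(\beta',\gamma')\vee\rho(\delta',\gamma')$.

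It remains to unwind this equality. Because $\rho(\beta',\delta')$ is the largest of the three distances and simultaneously equals the maximum of the other two, the two largest distances coincide, and one checks at once that the ultra-triangle inequality holds for each of the three pairs; passing back to the original labels yields $\rho(\alpha,\beta)\leqslant\rho(\alpha,\gamma)\vee\rho(\beta,\gamma)$. I expect no serious obstacle: all the analytic content lives in Proposition~\ref{2:as2.5}, and the only points needing care are the justification of $s_0=\infty$ and the limit-point dichotomy, both of which are straightforward.
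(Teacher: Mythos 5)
Your proof is correct and follows exactly the route the paper intends for this corollary (which it leaves implicit): Remark \ref{2:r2.4} gives $s\equiv\infty$ on $X^{+3}$ since inequality \eqref{2:eq2.1} never holds in an ultrametric space, so \eqref{2:eq2.5} holds trivially with $s_0=\infty$, and Proposition \ref{2:as2.5} then yields the ultra-triangle inequality in every pretangent space. Your extra care with the isolated-point case (where $\Omega^X_{p,\tilde{r}}$ collapses to a single point because $r_n\to 0$) and with unwinding the maximum equality into the ultrametric inequality are details the paper glosses over, but the underlying argument is the same.
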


\begin{remark}\label{2:r2.8}
The sums
$$
S_t(x)=\left(\underset{i=1}{\overset{n}{\sum}}x_i^t\right)^\frac{1}{t},
\  \  x_i\in(0,\infty),
$$
decrease from $+\infty$ to $x_1\vee\ldots\vee x_n$ when $t$
increases from 0 to $+\infty$. The inequality
$S_{t_2}(x)<S_{t_1}(x)$, $0<t_1<t_2<\infty$, is sometimes referred
to as the Jensen inequality. For the proof see, for example,
\cite{Be}.
\end{remark}

As it was shown in Proposition \ref{2:as2.5} the condition
$$
\underset{\underset{(x,y,z)\in X^{+3}}{x,y,z\rightarrow
p}}{\lim}s(x,z,y)=\infty
$$
is sufficient for the ultrametricity of all pretangent spaces
$\Omega^X_{p,\tilde{r}}$ but it is not necessary as the following
proposition shows.

\begin{proposition}\label{2:as2.9}
For every $s_0\in[1,\infty)$ there exists a metric space $(X,d)$
with a marked point $p$ such that
$$
\underset{\underset{(x,y,z)\in X^{+3}}{x,y,z\rightarrow
p}}{\lim}s(x,y,z)=s_0
$$
but all pretangent spaces $\Omega^X_{p,\tilde{r}}$ are ultrametric.
\end{proposition}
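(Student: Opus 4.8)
The plan is to take for $X$ a suitably sparse subset of a snowflaked real line. Fix $s_0\in[1,\infty)$ and set $\alpha=1/s_0\in(0,1]$. On $\mathbb{R}$ use the snowflake metric $d(u,v)=|u-v|^{\alpha}$, which is a genuine metric precisely because $\alpha\le 1$. Choose reals $x_n\downarrow 0$ decreasing super-geometrically, i.e. $x_{n+1}/x_n\to 0$ (for instance $x_n=2^{-2^{n}}$), put $p=0$ and $X=\{0\}\cup\{x_n:n\in\mathbb{N}\}$ with the induced metric. Then $p$ is a limit point of $X$, so pretangent spaces are available.

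First I would settle the betweenness condition. Any three pairwise distinct points of $X$ are collinear: if $a<b<c$ are their values, then $d(a,b)^{s_0}+d(b,c)^{s_0}=(b-a)+(c-b)=(c-a)=d(a,c)^{s_0}$, so $s=s_0$ is the unique root of \eqref{2:eq2.2} for this triple. Hence for every $(x,y,z)\in X^{+3}$ the extreme points carry the longest side, the middle point realizes \eqref{2:eq2.2}, and the associated betweenness value equals $s_0$ identically, not merely in the limit; in particular the limit in the statement is $s_0$. This holds even for the geometrically needle-like triples such as $(0,x_m,x_n)$ with $m\gg n$, because in the snowflake metric the $s_0$-th powers of the side lengths are exactly additive.

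The real content is that every $\Omega^X_{p,\tilde r}$ has at most two points, hence is trivially ultrametric. That this is \emph{forced} is clear from the theory: by Proposition \ref{2:as2.5} any $(\beta,\gamma,\delta)\in\Omega^{+3}_{p,\tilde r}$ would satisfy \eqref{2:eq2.6} with finite $s_0$, which together with $\rho(\beta,\delta)\ge\rho(\beta,\gamma)\ge\rho(\gamma,\delta)>0$ contradicts the ultra-triangle inequality; so the only way to reconcile $s_0<\infty$ with ultrametricity is $\Omega^{+3}_{p,\tilde r}=\varnothing$, i.e. $\card\Omega_{p,\tilde r}\le 2$. To verify this for the construction, fix a normalizing sequence $\{r_k\}$ and a maximal self-stable family containing $\tilde p$. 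Each member $\tilde a=\{a_k\}$ is mutually stable with $\tilde p$, so $|a_k|^{\alpha}/r_k$ converges, and if the limit is $0$ then $\tilde a$ lies in the class of $\tilde p$. Suppose $\tilde a,\tilde b$ give classes with positive limits $A,B$; writing $a_k=x_{n(k)}$, $b_k=x_{m(k)}$ for large $k$ gives $x_{n(k)}\sim(Ar_k)^{s_0}$ and $x_{m(k)}\sim(Br_k)^{s_0}$, hence $x_{n(k)}/x_{m(k)}\to(A/B)^{s_0}\in(0,\infty)$. Since $n(k),m(k)\to\infty$ and $x_{j}/x_{j+1}\to\infty$, the indices must eventually coincide, so $a_k=b_k$ eventually and $\tilde a\sim\tilde b$. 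Thus all positive-limit classes collapse into one, leaving at most the class of $\tilde p$ and a single further class, whence $\card\Omega^X_{p,\tilde r}\le 2$.

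The main obstacle is the apparent tension between the two requirements: the betweenness condition seems to demand genuine $s_0$-triangles accumulating at $p$, while ultrametricity forbids any such triangle from surviving into a pretangent space. Passing to a snowflaked line removes the first difficulty, since every triple automatically has exponent $s_0$; super-geometric lacunarity removes the second, since no two points of $X$ sit at a common scale. The delicate step is precisely the index-coincidence argument above, and it is here that the hypothesis $x_{n+1}/x_n\to 0$ is indispensable.
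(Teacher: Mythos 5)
Your proposal is correct and follows essentially the same route as the paper: the paper takes $X=\{0\}\cup\{b_n\}$ with $b_n/b_{n+1}\to\infty$ on the real line (settling the case $s_0=1$, where every triple is collinear) and then passes to the snowflaked metric $d^{1/s_0}$ for general $s_0$, which is exactly your construction. The only difference is that where the paper cites \cite{DM2} for the bound $\card\Omega^X_{p,\tilde{r}}\leqslant 2$, you verify it directly via the index-coincidence argument exploiting $x_{n+1}/x_n\to 0$; that argument is sound and makes the proof self-contained.
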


\begin{proof}
Let $\tilde{b}=\{b_n\}_{n\in\mathbb{N}}$ be a sequence of positive
real numbers such that
$\underset{n\rightarrow\infty}{\lim}\frac{b_n}{b_{n+1}}=\infty$.
Let us consider the metric space $(X,d)$ with
$$
X=\{0\}\cup\left(\underset{n\in\mathbb{N}}{\bigcup}\{b_n\}\right)
\text{ and } d(x,y)=|x-y|
$$
and with a marked point $p=0$. It simply follows from \cite{DM2}
that
$$
\card\Omega^X_{p,\tilde{b}}\leqslant 2
$$
for each pretangent space $\Omega^X_{p,\tilde{b}}$. Consequently
all these pretangent spaces are ultrametric.

Note now, that every triple $(x,y,z)\in X^3$ can be rearranged
such that $d(x,z)=d(x,y)+d(z,y)$. Consequently, for this $(X,d)$,
we have
$$
\underset{\underset{(x,y,z)\in X^{+3}}{x,y,z\rightarrow
p}}{\lim}s(x,y,z)=1,
$$
so the proposition follows for $s_0=1$. If $s_0>1$, then
$d^{\frac{1}{s_0}}$ is also a metric on $X$. The space
$(X,d^{\frac{1}{s_0}})$, the snowflaked version of $(X,d)$, is the
desirable example. This proves the proposition.
\end{proof}

Let $(X,d)$ be a metric space with a marked point $p$. Define a
function $F:X\times X\rightarrow\mathbb{R}$ by the rule
\begin{equation}\label{2:eq2.13}
F(x,y):=\begin{cases} \frac{d(x,y)(d(x,p)\wedge
d(y,p))}{(d(x,p)\vee d(y,p))^2} \qquad\text{if}\qquad
(x,y)\neq(p,p)\\
0\qquad\qquad\qquad\qquad\text{ if}\qquad (x,y)=(p,p).
\end{cases}
\end{equation}

Note that $0\leqslant F(x,y)\leqslant 2$ for all $x$ and $y$.
Write
\begin{equation}\label{2:eq2.14}
\Phi(x,y,z):=F(x,y)\vee F(x,z)\vee F(y,z)
\end{equation}
and
\begin{equation}\label{2:eq2.15}
\Psi(x,y,z):=\frac{d(x,y)\vee d(y,z)\vee d(x,z)}{d(x,y)\wedge
d(y,z)\wedge d(z,x)}
\end{equation}
for all $x,y,z\in X$ where $\Psi(x,y,z):=\infty$ if $d(x,y)\wedge
d(y,z)\wedge d(z,x)=0$.

The following theorem is an ultrametricity criterion for
pretangent spaces of general metric spaces.

\begin{theorem}\label{2:T2.10}
Let $(X,d)$ be a metric space with a marked point $p$. The
following two statements are equivalent.
\newline
(i) All pretangent spaces $\Omega^X_{p,\tilde{r}}$ are
ultrametric.
\newline
(ii) We have the limit relation
\begin{equation}\label{2:eq2.16}
\underset{x,y,z\rightarrow
p}{\lim}\frac{s(x,y,z)}{\Phi(x,y,z)}\Psi(x,y,z)=\infty
\end{equation}
where $\frac{1}{\Phi(x,y,z)}:=\infty$ if $\Phi(x,y,z)=0$.
\end{theorem}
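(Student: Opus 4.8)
The plan is to prove the two implications separately, after reformulating ultrametricity in a form adapted to the pretangent construction and isolating one scale‑invariance observation.

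First I would reformulate (i). Since in any ultrametric space the two largest of three pairwise distances coincide, (i) is equivalent to $\rho(\beta,\delta)=\rho(\beta,\gamma)$ for every $\Omega^X_{p,\tilde r}$ and every $(\beta,\gamma,\delta)\in\Omega^{+3}_{p,\tilde r}$. Fixing representatives and passing, exactly as in the proof of Proposition \ref{2:as2.5}, to the nonincreasing rearrangement $(x_n^*,y_n^*,z_n^*)\in X^{+3}$ with Jensen exponents $s_n:=s(x_n^*,z_n^*,y_n^*)$, the computation in \eqref{2:eq2.9} gives $\rho(\beta,\delta)=\lim_n\bigl((d^n_{[2]}/r_n)^{s_n}+(d^n_{[3]}/r_n)^{s_n}\bigr)^{1/s_n}$. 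In a nondegenerate triple $\rho(\gamma,\delta)>0$, so the two rescaled summands tend to the positive numbers $\rho(\beta,\gamma)\ge\rho(\gamma,\delta)$, and by Remark \ref{2:r2.8} the limit equals the larger one precisely when $s_n\to\infty$. Hence (i) is equivalent to the statement that $s_n\to\infty$ for every choice of representatives producing a nondegenerate pretangent triple.

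The structural point I would record is that $F$, $\Phi$, $\Psi$ are scale‑invariant: replacing every distance by its quotient with $r_n$ leaves \eqref{2:eq2.13}, \eqref{2:eq2.14}, \eqref{2:eq2.15} unchanged. Thus along representatives of $\beta,\gamma,\delta$ one has $F(x_n,y_n)\to\rho(\beta,\gamma)(\|\beta\|\wedge\|\gamma\|)/(\|\beta\|\vee\|\gamma\|)^2$ (writing $\|\cdot\|=\rho(o,\cdot)$, $o$ the class of $\tilde p$), and likewise $\Phi\to\Phi_\infty\in(0,2]$ and $\Psi\to\Psi_\infty=\rho(\beta,\delta)/\rho(\gamma,\delta)\in[1,\infty)$; here $\Phi_\infty>0$ because at most one of $\beta,\gamma,\delta$ equals $o$, so some pair has both norms positive, and $\Psi_\infty<\infty$ is nondegeneracy. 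Implication (ii)$\Rightarrow$(i) is then immediate: along a nondegenerate triple $\Psi_n/\Phi_n$ stays bounded (numerator bounded, denominator bounded below), so (ii), applied to the ordering $(x_n^*,z_n^*,y_n^*)$ and using the symmetry of $\Phi,\Psi$, forces $s_n\to\infty$, and the reformulation yields ultrametricity of every $\Omega^X_{p,\tilde r}$.

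For (i)$\Rightarrow$(ii) I would argue by contraposition. If (ii) fails there are $M<\infty$ and triples $(x_n,y_n,z_n)\to p$ with $s(x_n,y_n,z_n)\Psi(x_n,y_n,z_n)/\Phi(x_n,y_n,z_n)\le M$; since $\Phi\le2$, $\Psi\ge1$, $s\ge1$, this yields simultaneously $s_n\le 2M$, $\Psi_n\le 2M$, $\Phi_n\ge 1/M$, and in particular each $d(x_n,y_n)$ is the strict longest side with $d(x_n,z_n)^{s_n}+d(z_n,y_n)^{s_n}=d(x_n,y_n)^{s_n}$. Setting $r_n:=d(x_n,p)\vee d(y_n,p)\vee d(z_n,p)\to 0$ and passing to a subsequence along which all rescaled distances, norms, and $s_n,\Phi_n,\Psi_n$ converge, the sequences $\tilde x,\tilde y,\tilde z$ become mutually stable and a Zorn extension places their classes $\beta,\gamma,\delta$ in some $\Omega^X_{p,\tilde r}$. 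The bound $\Phi_n\ge 1/M$ forces at least one rescaled pairwise distance to be positive, and then $\Psi_n\le 2M$ forces all three to be positive; meanwhile $s_n\le 2M$ keeps the limiting exponent finite, so the Jensen identity passes to the limit as $\rho(\beta,\gamma)=\bigl(\rho(\beta,\delta)^{s_\infty}+\rho(\gamma,\delta)^{s_\infty}\bigr)^{1/s_\infty}>\rho(\beta,\delta)\vee\rho(\gamma,\delta)$. Thus this $\Omega^X_{p,\tilde r}$ is not ultrametric, contradicting (i). I expect this construction to be the main obstacle: one must extract from the single scalar bound $s\Psi/\Phi\le M$ the three separate facts (finite limiting exponent, bounded aspect ratio, lower bound on $\Phi$) and show that it is exactly the interplay of the $\Phi$ lower bound (one positive side and positive norms) with the $\Psi$ upper bound (all sides comparable) that prevents degeneration, so that the strict triangle inequality survives the limit; some care with subsequences is also needed to fix the longest‑side labeling.
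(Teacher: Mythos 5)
Your proposal is correct and follows essentially the same route as the paper's own proof: your (i)$\Rightarrow$(ii) is the paper's argument by contradiction (extract the three separate bounds on $s$, $\Psi$, $\Phi$ from the failure of \eqref{2:eq2.16}, build a normalizing sequence from the distances to $p$, pass the Jensen identity of Lemma \ref{2:L2.1} to the limit to produce a non-ultrametric pretangent triple), and your (ii)$\Rightarrow$(i) is the paper's Proposition~\ref{2:as2.5}-style limit argument based on the scale invariance of $F$, $\Phi$, $\Psi$. The deviations are organizational rather than substantive: you take $r_n$ to be the maximum of the three distances to $p$ instead of the paper's choice of $d(x_n,p)$ for the pair realizing $\Phi$, and in (ii)$\Rightarrow$(i) you show directly that $\Phi$ stays bounded below along nondegenerate pretangent triples (so \eqref{2:eq2.16} forces $s_n\to\infty$), which is precisely the contrapositive of the paper's case analysis in which $1/\Phi\to\infty$ is shown to force degeneracy of the triple.
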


\begin{proof}
$(i)\Rightarrow(ii)$ Suppose statement $(i)$ is true. If
\eqref{2:eq2.16} does not hold, then there are $\alpha\in(0,\infty)$
and sequences $\tilde{x}=\{x_n\}_{n\in\mathbb{N}}$,
$\tilde{y}=\{y_n\}_{n\in\mathbb{N}}$,
$\tilde{z}=\{z_n\}_{n\in\mathbb{N}}$ from $\tilde{X}$ such that
$$
\underset{n\rightarrow\infty}{\lim}x_n=
\underset{n\rightarrow\infty}{\lim}y_n=
\underset{n\rightarrow\infty}{\lim}z_n=p
$$
and that
$$
\underset{n\rightarrow\infty}{\lim}
\frac{s(x_n,y_n,z_n)}{\Phi(x_n,y_n,z_n)}\Psi(x_n,y_n,z_n)=\alpha.
$$
Since the double inequalities
$\frac{1}{2}\leqslant\frac{1}{\Phi(x,y,z)}\leqslant\infty$,
$1\leqslant\Psi(x,y,z)\leqslant\infty$ and $1\leqslant
s(x,y,z)\leqslant\infty$ hold for all $x,y,z\in X$, we can
suppose, proceeding to a subsequence if it is necessary, that
there exist the following limits
\begin{equation}\label{2:eq2.17}
\begin{array}{l}
\underset{n\rightarrow\infty}{\lim}\Psi(x_n,y_n,z_n)=:\psi_0,\qquad
\underset{n\rightarrow\infty}{\lim}\frac{1}{\Phi(x_n,y_n,z_n)}=:\phi_0,\\
\qquad\qquad\qquad\underset{n\rightarrow\infty}{\lim}s(x_n,y_n,z_n)=:s_0
\end{array}
\end{equation}
with $\infty>\psi_0\geqslant 1$, $\infty>\phi_0\geqslant\frac{1}{2}$
and with $\infty>s_0\geqslant 1$. It follows from \eqref{2:eq2.14}
that for every $n\in\mathbb{N}$ we have at least one of the
following equalities
$$
\begin{array}{l}
F(x_n,y_n)=\Phi(x_n,y_n,z_n),\ \ F(y_n,z_n)=\Phi(x_n,y_n,z_n),\\
\qquad\qquad\qquad F(z_n,x_n)=\Phi(x_n,y_n,z_n).
\end{array}
$$
Suppose the first equality
\begin{equation}\label{2:eq2.18}
F(x_n,y_n)=\Phi(x_n,y_n,z_n)
\end{equation}
holds on an infinite subset of $\mathbb{N}$. Then, passing once
again to a subsequence, we take that \eqref{2:eq2.18} is true for
every $n\in\mathbb{N}$. Hence the second equality in
\eqref{2:eq2.17} can be rewritten as
\begin{equation}\label{2:eq2.19}
\underset{n\rightarrow\infty}{\lim}F(x_n,y_n)=
\underset{n\rightarrow\infty}{\lim}\frac{d(x_n,y_n)(d(x_n,p)\wedge
d(y_n,p))}{(d(x_n,p)\vee d(y_n,p))^2}=\frac{1}{\phi_0}\in(0,2].
\end{equation}
Analogously, we can suppose that the equality
\begin{equation}\label{2:eq2.20}
d(x_n,p)=d(x_n,p)\vee d(y_n,p)
\end{equation}
holds on some infinite subset of $\mathbb{N}$ and passing to a
subsequence, that this subset equals $\mathbb{N}$. Relations
\eqref{2:eq2.19}--\eqref{2:eq2.20} imply the inequality
$d(x_n,p)>0$ for sufficiently large $n$. Write
\begin{equation}\label{2:eq2.21}
r_n:=\begin{cases}
1\qquad\qquad\text{if}\quad d(x_n,p)=0\\
d(x_n,p)\quad\text{if}\quad d(x_n,p)>0
\end{cases}
\end{equation}
and $\tilde{r}:=\{r_n\}_{n\in\mathbb{N}}$. We can now easily show
that the quantities
\begin{equation}\label{2:eq2.22}
\frac{d(x_n,p)}{r_n},\quad\frac{d(y_n,p)}{r_n},\quad
\frac{d(z_n,p)}{r_n},\quad\frac{d(x_n,y_n)}{r_n},\quad
\frac{d(x_n,z_n)}{r_n},\quad\frac{d(y_n,z_n)}{r_n}
\end{equation}
are bounded above by a constant. Indeed, for
$\frac{d(x_n,p)}{r_n}$ and $\frac{d(y_n,p)}{r_n}$ it follows from
\eqref{2:eq2.20}--\eqref{2:eq2.21} and for
$\frac{d(x_n,y_n)}{r_n}$ from the triangle inequality
$$
\frac{d(x_n,y_n)}{r_n}\leqslant\frac{d(x_n,p)}{r_n}+\frac{d(y_n,p)}{r_n}.
$$
Since $\frac{d(x_n,y_n)}{r_n}$ is bounded above and the first
limit in \eqref{2:eq2.17} is finite, the quantities
$\frac{d(y_n,z_n)}{r_n}$ and $\frac{d(x_n,z_n)}{r_n}$ are also
bounded above. Finally, the inequality
$$
d(p,z_n)\leqslant d(p,x_n)+d(x_n,z_n)
$$
implies the desirable boundedness of $\frac{d(z_n,p)}{r_n}$.

Since all quantities in \eqref{2:eq2.22} are bounded, there is a
sequence $\{n_k\}_{k\in\mathbb{N}}$ of natural numbers for which
all limits
\begin{equation}\label{2:eq2.23}
\begin{array}{l}
\underset{k\rightarrow\infty}{\lim}\frac{d(x_{n_k},p)}{r_{n_k}},\quad
\underset{k\rightarrow\infty}{\lim}\frac{d(y_{n_k},p)}{r_{n_k}},\quad
\underset{k\rightarrow\infty}{\lim}\frac{d(z_{n_k},p)}{r_{n_k}},\\
\underset{k\rightarrow\infty}{\lim}\frac{d(x_{n_k},y_{n_k})}{r_{n_k}},\quad
\underset{k\rightarrow\infty}{\lim}\frac{d(x_{n_k},z_{n_k})}{r_{n_k}},\quad
\underset{k\rightarrow\infty}{\lim}\frac{d(y_{n_k},z_{n_k})}{r_{n_k}}
\end{array}
\end{equation}
are finite. Renaming $\tilde{x}:=\{x_{n_k}\}_{k\in\mathbb{N}}$,
$\tilde{y}:=\{y_{n_k}\}_{k\in\mathbb{N}}$,
$\tilde{z}:=\{z_{n_k}\}_{k\in\mathbb{N}}$ and
$\tilde{r}:=\{r_{n_k}\}_{k\in\mathbb{N}}$ we obtain that
$\tilde{x},\tilde{y},\tilde{z}$ and $\tilde{p}$ are mutually
stable w.r.t. $\tilde{r}$. We can now easily show that
\begin{equation}\label{2:eq2.24}
\tilde{d}_{\tilde{r}}(\tilde{x},\tilde{y})\neq 0
\end{equation}
and that
\begin{equation}\label{2:eq2.25}
\tilde{d}_{\tilde{r}}(\tilde{x},\tilde{z})\neq
0\neq\tilde{d}_{\tilde{r}}(\tilde{y},\tilde{z}).
\end{equation}
For this purpose, note that \eqref{2:eq2.19} -- \eqref{2:eq2.21}
imply
$$
\begin{array}{l}
\underset{n\rightarrow\infty}{\lim}F(x_n,y_n)=\underset{n\rightarrow\infty}{\lim}
\frac{d(x_n,y_n)}{r_n}\left(\frac{d(x_n,p)}{r_n}\wedge
\frac{d(p,y_n)}{r_n}\right)\\\qquad\qquad\qquad=\underset{n\rightarrow\infty}{\lim}
\frac{d(x_n,y_n)}{r_n}\frac{d(p,y_n)}{r_n}=\tilde{d}_{\tilde{r}}(\tilde{x},\tilde{y})
\tilde{d}_{\tilde{r}}(\tilde{p},\tilde{y})=\frac{1}{\phi_0}\in(0,2],
\end{array}
$$
consequently relation \eqref{2:eq2.24} holds. Moreover
\eqref{2:eq2.24} and the finiteness of the first limit in
\eqref{2:eq2.17} imply \eqref{2:eq2.25}.

Let $(\Omega_{p,\tilde{r}}^X,\rho)$ be a pretangent space such
that $\tilde{x}\in\beta$, $\tilde{y}\in\gamma$ и
$\tilde{z}\in\delta$ for some
$\beta,\gamma,\delta\in\Omega_{p,\tilde{r}}$. The definition of
the function $(x,y,z)\mapsto s(x,y,z)$ and the finiteness of the
last limit in \eqref{2:eq2.17} imply the equality
$$
\rho(\beta,\gamma)=\rho(\beta,\gamma)\vee \rho(\gamma,\delta)\vee
\rho(\delta,\beta)
$$
and, in addition, it follows from \eqref{2:eq2.24} --
\eqref{2:eq2.25} that $\rho(\beta,\gamma)\wedge
\rho(\gamma,\delta)\wedge \rho(\delta,\beta)>0$. We may assume,
without loss of generality, that
$$
\rho(\beta,\gamma)\geqslant\rho(\gamma,\delta)\geqslant\rho(\delta,\beta)>0,
$$
that is $(\gamma,\beta,\delta)\in\Omega_{p,\tilde{r}}^{+3}$. Using
the last limit relation in \eqref{2:eq2.17} and reasoning as in
the proof of Proposition \ref{2:as2.5} we obtain the equality
\begin{equation}\label{2:eq2.26}
\rho(\beta,\gamma)=\left((\rho(\beta,\delta))^{s_0}+(\rho(\delta,\gamma))^{s_0}
\right)^\frac{1}{s_0}.
\end{equation}
Since $s_0\in[1,\infty)$, the last equality shows that
$(\Omega_{p,\tilde{r}}^X,\rho)$ is not an ultrametric space
contrary to the assumption. To complete the proof of the
implication $(i)\Rightarrow(ii)$ it suffices to observe that
\eqref{2:eq2.26} was derived from relations \eqref{2:eq2.24} and
\eqref{2:eq2.25} and that these two relations remain valid if the
pair $(x_n,y_n)$ in \eqref{2:eq2.18} is replaced by an arbitrary
pair from the set
$\{(x_n,z_n),(y_n,x_n),(y_n,z_n),(z_n,x_n),(z_n,y_n)\}$.

$(ii)\Rightarrow(i)$ Suppose now that \eqref{2:eq2.16} holds. We
must prove that all pretangent spaces
$(\Omega_{p,\tilde{r}}^X,\rho)$ are ultrametric. To this end it
suffices to show that for an arbitrary normalizing sequence
$\tilde{r}$ the inequality
\begin{equation}\label{2:eq2.27}
\tilde{d}_{\tilde{r}}(\tilde{x},\tilde{y})\leqslant
\tilde{d}_{\tilde{r}}(\tilde{x},\tilde{z})\vee
\tilde{d}_{\tilde{r}}(\tilde{z},\tilde{y})
\end{equation}
holds for all mutually stable (w.r.t. $\tilde{r}$)
$\tilde{x},\tilde{y},\tilde{z}\in\tilde{X}$ whenever
\begin{equation}\label{2:eq2.28}
\tilde{d}_{\tilde{r}}(\tilde{x},\tilde{y})\geqslant
\tilde{d}_{\tilde{r}}(\tilde{x},\tilde{z})\geqslant
\tilde{d}_{\tilde{r}}(\tilde{z},\tilde{y})>0
\end{equation}
and whenever there are finite limits
\begin{equation}\label{2:eq2.29}
\tilde{d}_{\tilde{r}}(\tilde{x},\tilde{p})=\underset{n\rightarrow\infty}{\lim}\frac{d(x_n,p)}{r_n},\quad
\tilde{d}_{\tilde{r}}(\tilde{y},\tilde{p})=\underset{n\rightarrow\infty}{\lim}\frac{d(y_n,p)}{r_n},\quad
\tilde{d}_{\tilde{r}}(\tilde{z},\tilde{p})=\underset{n\rightarrow\infty}{\lim}\frac{d(z_n,p)}{r_n}
\end{equation}
where $\{x_n\}_{n\in\mathbb{N}}=\tilde{x}$,
$\{y_n\}_{n\in\mathbb{N}}=\tilde{y}$,
$\{z_n\}_{n\in\mathbb{N}}=\tilde{z}$. Limit relation
\eqref{2:eq2.16}, the definition of $\Psi$ and inequalities
\eqref{2:eq2.28} imply
$$
\infty=\underset{n\rightarrow\infty}{\lim}\frac{s(x_n,y_n,z_n)}{\Phi(x_n,y_n,z_n)}\Psi(x_n,y_n,z_n)=
\frac{\tilde{d}_{\tilde{r}}(\tilde{x},\tilde{y})}{\tilde{d}_{\tilde{r}}(\tilde{y},\tilde{z})}
\underset{n\rightarrow\infty}{\lim}\frac{s(x_n,y_n,z_n)}{\Phi(x_n,y_n,z_n)}.
$$
Consequently we have
\begin{equation}\label{2:eq2.30}
\underset{n\rightarrow\infty}{\lim}\frac{s(x_n,y_n,z_n)}{\Phi(x_n,y_n,z_n)}=\infty
\end{equation}
because, by \eqref{2:eq2.28}, the quantity
$\frac{\tilde{d}_{\tilde{r}}(\tilde{x},\tilde{y})}{\tilde{d}_{\tilde{r}}(\tilde{y},\tilde{z})}$
are finite and positive. If, in addition, the equality
\begin{equation}\label{2:eq2.31}
\underset{n\rightarrow\infty}{\lim}s(x_n,y_n,z_n)=\infty
\end{equation}
holds, then reasoning as in the proof of the second part of
Proposition \ref{2:as2.5} we obtain inequality \eqref{2:eq2.27}.
If \eqref{2:eq2.31} does not hold, then there are an infinite
strictly increasing sequence $\{n_k\}_{k\in\mathbb{N}}$ of natural
numbers and a constant $s_0\in[1,\infty)$ such that
$$
\underset{k\rightarrow\infty}{\lim}s(x_{n_k},y_{n_k},z_{n_k})=s_0.
$$
The last equality and \eqref{2:eq2.30} have as a consequence
\begin{equation}\label{2:eq2.32}
\underset{k\rightarrow\infty}{\lim}\frac{1}{\Phi(x_{n_k},y_{n_k},z_{n_k})}=\infty.
\end{equation}
It follows from this and \eqref{2:eq2.13}--\eqref{2:eq2.14} that
$$
0=\underset{k\rightarrow\infty}{\lim} \frac{(d(x_{n_k},p)\wedge
d(y_{n_k},p))d(x_{n_k},y_{n_k})}{(d(x_{n_k},p)\vee
d(p,y_{n_k}))^2}=\tilde{d}_{\tilde{r}}(\tilde{x},\tilde{y})
\underset{k\rightarrow\infty}{\lim} \frac{
\frac{d(x_{n_k},p)}{r_{n_k}}\wedge\frac{d(y_{n_k},p)}{r_{n_k}}}
{\left(\frac{d(x_{n_k},p)}{r_{n_k}}\vee\frac{d(y_{n_k},p)}{r_{n_k}}\right)^2}
$$
and consequently
$$
\underset{k\rightarrow\infty}{\lim} \frac{
\frac{d(x_{n_k},p)}{r_{n_k}}\wedge\frac{d(y_{n_k},p)}{r_{n_k}}}
{\left(\frac{d(x_{n_k},p)}{r_{n_k}}\vee\frac{d(y_{n_k},p)}{r_{n_k}}\right)^2}=0.
$$
Similarly we have
\begin{equation}\label{2:eq2.33}
\underset{k\rightarrow\infty}{\lim} \frac{
\frac{d(x_{n_k},p)}{r_{n_k}}\wedge\frac{d(z_{n_k},p)}{r_{n_k}}}
{\left(\frac{d(x_{n_k},p)}{r_{n_k}}\vee\frac{d(z_{n_k},p)}{r_{n_k}}\right)^2}=
\underset{k\rightarrow\infty}{\lim} \frac{
\frac{d(y_{n_k},p)}{r_{n_k}}\wedge\frac{d(z_{n_k},p)}{r_{n_k}}}
{\left(\frac{d(y_{n_k},p)}{r_{n_k}}\vee\frac{d(z_{n_k},p)}{r_{n_k}}\right)^2}=0.
\end{equation}
Note that
\begin{equation}\label{2:eq2.34}
\tilde{d}_{\tilde{r}}(\tilde{x},\tilde{p})\vee
\tilde{d}_{\tilde{r}}(\tilde{y},\tilde{p})\vee
\tilde{d}_{\tilde{r}}(\tilde{z},\tilde{p})>0
\end{equation}
because in the opposite case the triangle inequality implies
$\tilde{d}_{\tilde{r}}(\tilde{x},\tilde{y})=\tilde{d}_{\tilde{r}}(\tilde{x},\tilde{z})=
\tilde{d}_{\tilde{r}}(\tilde{z},\tilde{y})=0$, contrary to
\eqref{2:eq2.28}. Suppose that
\begin{equation}\label{2:eq2.35}
\tilde{d}_{\tilde{r}}(\tilde{x},\tilde{p})\vee
\tilde{d}_{\tilde{r}}(\tilde{y},\tilde{p})\vee
\tilde{d}_{\tilde{r}}(\tilde{z},\tilde{p})=\tilde{d}_{\tilde{r}}(\tilde{z},\tilde{p}).
\end{equation}
This equality and \eqref{2:eq2.33} imply
$$
0=\underset{k\rightarrow\infty}{\lim}\frac{1}{(\tilde{d}_{\tilde{r}}(\tilde{z},\tilde{p}))^2}
\left(\frac{d(x_{n_k},p)}{r_{n_k}}\wedge\frac{d(z_{n_k},p)}{r_{n_k}}\right)=
\frac{\tilde{d}_{\tilde{r}}(\tilde{x},\tilde{p})}{(\tilde{d}_{\tilde{r}}(\tilde{z},\tilde{p}))^2},
$$
i.e. $\tilde{d}_{\tilde{r}}(\tilde{x},\tilde{p})=0$. Completely
analogously we have
$\tilde{d}_{\tilde{r}}(\tilde{y},\tilde{p})=0$. It means that
$\tilde{d}_{\tilde{r}}(\tilde{x},\tilde{y})=0$ which implies
\eqref{2:eq2.27}. It still remains to note that similar arguments
are applicable if the right side of \eqref{2:eq2.35} is
$\tilde{d}_{\tilde{r}}(\tilde{x},\tilde{p})$ or
$\tilde{d}_{\tilde{r}}(\tilde{y},\tilde{p})$ instead of
$\tilde{d}_{\tilde{r}}(\tilde{z},\tilde{p})$. Hence in all cases
\eqref{2:eq2.16} implies \eqref{2:eq2.27}.
\end{proof}


\section{Metric betweenness in pretangent spaces}
The purpose of this part of the paper is to obtain an analog of
Theorem \ref{2:T2.10} for the pretangent spaces which are not
ultrametric. Recall the following definition, see, for example,
\cite[p. 55]{Pa}.

\begin{definition}\label{3:d3.1}
Let $(X,d)$ be a metric space and let $x,y,z$ be distinct points
of $X$. The point $y$ lies between points $x$ and $z$ if
\begin{equation}\label{3:eq3.1}
d(x,z)=d(x,y)+d(y,z).
\end{equation}
\end{definition}

Denote by $\mathfrak{M}$ the class of all metric spaces $(X,d)$
such that \eqref{3:eq3.1} holds for all $x,y,z\in X$ whenever
$d(x,z)\geqslant d(x,y)\geqslant d(y,z)$.

It is easy to see that $X\in\mathfrak{M}$ if and only if the
betweenness exponent $t_0(Y)=1$ for each $Y\subseteq X$ with $\card
Y\geqslant 3$. Proposition \ref{2:as2.5} shows that
$t_0(\Theta)=s_0$ for every subspace $\Theta$ of pretangent space
$(\Omega_{p,\tilde{r}},\rho)$ provided that  $\card\Theta\geqslant
3$ and limit relation \eqref{2:eq2.5} holds with $s_0<\infty$. Thus
the spaces $(\Omega_{p,\tilde{r}}^X,\rho^{s_0})$ belong to
$\mathfrak{M}$ under these conditions.

\begin{theorem}\label{3:T3.2}
Let $(X,d)$ be a metric space, $p$ a limit point of $X$ and $s_1$
a positive number. The membership relation
\begin{equation}\label{3:eq3.2}
(\Omega_{p,\tilde{r}}^X,\rho^{s_1})\in\mathfrak{M}
\end{equation}
holds for every pretangent space $\Omega_{p,\tilde{r}}^X$ if and
only if
\begin{equation}\label{3:eq3.3}
\underset{\underset{(x,y,z)\in X^{+3}}{x,y,z\rightarrow p}}{\lim}
\frac{\Psi(x,y,z)s^2(x,y,z)}{\Phi(x,y,z)(s_1-s(x,y,z))^2}=\infty
\end{equation}
where $\left(\frac{s(x,y,z)}{s_1-s(x,y,z)}\right)^2:=1$ in the
case $s(x,y,z)=\infty$.
\end{theorem}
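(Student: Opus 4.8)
The plan is to reduce membership in $\mathfrak{M}$ to a pointwise statement about the betweenness exponent and then to imitate the two-directional argument of Theorem~\ref{2:T2.10}. Raising a metric to the power $s_1$ preserves the ordering of sides, so for a triple $(\beta,\gamma,\delta)\in\Omega_{p,\tilde r}^{+3}$, i.e. with $\rho(\beta,\delta)\geqslant\rho(\beta,\gamma)\geqslant\rho(\gamma,\delta)>0$, membership $(\Omega_{p,\tilde r}^X,\rho^{s_1})\in\mathfrak{M}$ is exactly the additive relation $(\rho(\beta,\delta))^{s_1}=(\rho(\beta,\gamma))^{s_1}+(\rho(\gamma,\delta))^{s_1}$ for every such triple. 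By Lemma~\ref{2:L2.1} this relation is equivalent to $s(\beta,\delta,\gamma)=s_1$ when $\rho(\beta,\delta)>\rho(\beta,\gamma)$, while in the remaining case $\rho(\beta,\delta)=\rho(\beta,\gamma)$ (where the exponent is $+\infty$) it fails, since $\rho(\gamma,\delta)>0$ and $s_1<\infty$. Thus $(\Omega_{p,\tilde r}^X,\rho^{s_1})\in\mathfrak{M}$ if and only if the betweenness exponent of every triple from $\Omega_{p,\tilde r}^{+3}$ equals $s_1$. The whole proof then hinges on the elementary fact that $g(s):=s^2/(s_1-s)^2$, $s\in[1,\infty]$ with $g(\infty)=1$ by the stated convention, is bounded below by a positive constant and satisfies $g(s)\to\infty$ if and only if $s\to s_1$; so $g$ records precisely the deviation of $s$ from the target value.

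Sufficiency: assume \eqref{3:eq3.3} and deduce \eqref{3:eq3.2}. Fix a normalizing sequence $\tilde r$ and mutually stable sequences $\tilde x=\{x_n\}$, $\tilde y=\{y_n\}$, $\tilde z=\{z_n\}$ representing a nondegenerate triple, the sides being labelled so that $\tilde d(\tilde x,\tilde y)\geqslant\tilde d(\tilde x,\tilde z)\geqslant\tilde d(\tilde z,\tilde y)>0$; as in Proposition~\ref{2:as2.5} the points $x_n,y_n,z_n$ tend to $p$ and, after a rearrangement, lie in $X^{+3}$ for large $n$. Since all three pretangent distances are positive and finite, $\Psi(x_n,y_n,z_n)\to\tilde d(\tilde x,\tilde y)/\tilde d(\tilde z,\tilde y)$ is finite and positive, and each value $F(\cdot,\cdot)$ tends to a positive finite limit as in \eqref{2:eq2.19}--\eqref{2:eq2.24}, so $1/\Phi(x_n,y_n,z_n)$ stays bounded away from $0$ and $\infty$. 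Hence \eqref{3:eq3.3} forces $g(s(x_n,y_n,z_n))\to\infty$, and by the observation above $s(x_n,y_n,z_n)\to s_1$. Running the computation \eqref{2:eq2.9} of Proposition~\ref{2:as2.5} with this limit yields $(\tilde d(\tilde x,\tilde y))^{s_1}=(\tilde d(\tilde x,\tilde z))^{s_1}+(\tilde d(\tilde z,\tilde y))^{s_1}$, the required $\rho^{s_1}$-betweenness. As the triple was arbitrary, $(\Omega_{p,\tilde r}^X,\rho^{s_1})\in\mathfrak{M}$.

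Necessity: prove $\eqref{3:eq3.2}\Rightarrow\eqref{3:eq3.3}$ by contraposition. If \eqref{3:eq3.3} fails there are $\alpha\in[0,\infty)$ and triples $(x_n,y_n,z_n)\in X^{+3}$ with $x_n,y_n,z_n\to p$ along which the expression in \eqref{3:eq3.3} converges to $\alpha$. Passing to a subsequence, assume $\Psi\to\psi_0$, $1/\Phi\to\phi_0$ and $s\to s_0$ separately. Because $g$ is bounded below by a positive constant and $\Psi\geqslant1$, $1/\Phi\geqslant\tfrac12$, finiteness of $\alpha$ forces $\psi_0<\infty$, $\phi_0<\infty$ and $g(s_0)<\infty$, whence $s_0\neq s_1$ (possibly $s_0=\infty$). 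Now construct a normalizing sequence exactly as in \eqref{2:eq2.21}: after a further subsequence arrange that one of $F(x_n,y_n),F(x_n,z_n),F(y_n,z_n)$ realizes $\Phi$ and that the corresponding maximal distance to $p$ is attained, set $r_n$ to be that distance (or $1$), and extract a subsequence along which all quotients \eqref{2:eq2.22} converge, producing mutually stable $\tilde x,\tilde y,\tilde z,\tilde p$. Finiteness of $\phi_0$ gives, as in \eqref{2:eq2.24}--\eqref{2:eq2.25}, that all three pretangent distances are positive, and $\psi_0<\infty$ guarantees the strict ordering, so we obtain a genuine triple of $\Omega_{p,\tilde r}^{+3}$. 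Reasoning as in Proposition~\ref{2:as2.5} with $s(x_n,y_n,z_n)\to s_0$ shows this triple has betweenness exponent $s_0\neq s_1$ (when $s_0=\infty$ it is the ultrametric case $\rho(\text{longest})=\rho(\text{middle})$ with positive shortest side). By the reformulation of the first paragraph, $\rho^{s_1}$-betweenness fails for this triple, so $(\Omega_{p,\tilde r}^X,\rho^{s_1})\notin\mathfrak{M}$, which closes the contrapositive.

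The step I expect to be the main obstacle is the necessity direction: one must ensure that whatever finite $\alpha$ and whatever limits $\psi_0,\phi_0,s_0$ occur, the normalizing sequence built from \eqref{2:eq2.21} always yields a pretangent triple with all three distances strictly positive and correctly ordered, so that the deviation $s_0\neq s_1$ genuinely produces a failure of the additive betweenness relation. This is precisely the careful handling of possibly infinite limits and the reduction to the nondegenerate case carried out in Theorem~\ref{2:T2.10}, now repeated with the factor $g$ measuring the distance of $s_0$ from $s_1$ rather than from $\infty$.
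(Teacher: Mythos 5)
Your proof is correct and its skeleton is the paper's own: both directions reduce to the machinery of Theorem \ref{2:T2.10} (the normalizing sequence \eqref{2:eq2.21}, extraction of subsequences along which all quotients \eqref{2:eq2.22} converge, positivity of the limit distances from $\phi_0,\psi_0<\infty$), followed by the limit computation of Proposition \ref{2:as2.5}, with uniqueness in Lemma \ref{2:L2.1} (equivalently, strict monotonicity of \eqref{3:eq3.13}) converting ``exponent $\neq s_1$'' into failure of $\rho^{s_1}$-betweenness.

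Two deviations deserve comment. First, in the sufficiency direction the paper never claims $1/\Phi$ is bounded; it splits into the case $s(x_k,y_k,z_k)\to s_1$, where Proposition \ref{2:as2.5} gives \eqref{3:eq3.20}, and the opposite case, where $1/\Phi\to\infty$ along a subsequence and the arguments after \eqref{2:eq2.32} force a pretangent distance to vanish, contradicting nondegeneracy. You short-circuit this by proving boundedness of $1/\Phi$ outright, which is a legitimate simplification, but your stated justification --- that \emph{each} value $F(\cdot,\cdot)$ tends to a positive finite limit --- is false when one of $\beta,\gamma,\delta$ is the class of the constant sequence $\tilde{p}$: the two values of $F$ involving that vertex then tend to $0$. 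The conclusion survives because at most one vertex can be that class (the pairwise $\rho$-distances are positive), so for the remaining pair $(\tilde{a},\tilde{b})$ one has $F(a_n,b_n)\to\tilde{d}(\tilde{a},\tilde{b})\bigl(\tilde{d}(\tilde{a},\tilde{p})\wedge\tilde{d}(\tilde{b},\tilde{p})\bigr)/\bigl(\tilde{d}(\tilde{a},\tilde{p})\vee\tilde{d}(\tilde{b},\tilde{p})\bigr)^2>0$, and $\Phi$, being the maximum, stays bounded away from zero; this one-line patch should be inserted. Second, in the necessity direction you replace the paper's algebra with $s_2=\lim s^2/(s_1-s)^2$ --- ruling out $s_2=1$ (which requires the separate subcases $s_0=\infty$ and $s_0=s_1/2$) and then solving the quadratic to get the roots \eqref{3:eq3.14} --- by the direct observation that boundedness of $s^2/(s_1-s)^2$ forces $\lim s(x_n,y_n,z_n)=s_0\neq s_1$. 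That is a genuine streamlining with no loss: all the paper ultimately extracts from its case analysis is precisely $s_0\neq s_1$, which then clashes with \eqref{3:eq3.11} via monotonicity, exactly as in your last step.
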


\begin{remark}\label{3:r3.3}
Membership relation \eqref{3:eq3.2} means, in particular, that
$\rho^{s_1}$ is a metric on $\Omega_{p,\tilde{r}}^X$. If
$\card\Omega_{p,\tilde{r}}^X\geqslant 3$, then using Lemma
\ref{2:L2.2} we see that $s_1$ equals the betweenness exponent of
$(\Omega_{p,\tilde{r}}^X,\rho)$ whenever \eqref{3:eq3.2} holds.
Moreover \eqref{3:eq3.2} holds for all $s_1>0$ if and only if
$\card\Omega_{p,\tilde{r}}^X\leqslant 2$.
\end{remark}

The following proof succeeds the scheme of the proof of Theorem
\ref{2:T2.10}

\begin{proof}[Proof of Theorem \ref{3:T3.2}.]
Suppose that \eqref{3:eq3.2} holds for all pretangent spaces
$\Omega_{p,\tilde{r}}^X$. We must prove that \eqref{3:eq3.3}
holds. The direct calculations show that
\begin{equation}\label{3:eq3.4}
\frac{(s(x,y,z))^2}{(s_1-s(x,y,z))^2}\geqslant
\begin{cases}
1\qquad\qquad\text{ if } s_1=1\\
1\wedge\frac{1}{(1-s_1)^2}\text{ if }
s_1\in(0,\infty)\diagdown\{1\}.
\end{cases}
\end{equation}
Hence if \eqref{3:eq3.3} does not hold, then as in the case of
\eqref{2:eq2.17} there is a sequence of triples $(x_n,y_n,z_n)\in
X^{+3}$, $n\in\mathbb{N}$, such that
$$
\underset{n\rightarrow\infty}{\lim}x_n=\underset{n\rightarrow\infty}{\lim}y_n=
\underset{n\rightarrow\infty}{\lim}z_n=p
$$
and that the following finite positive limits exist
\begin{equation}\label{3:eq3.5}
\begin{array}{l}
\underset{n\rightarrow\infty}{\lim}\Psi(x_n,y_n,z_n)=\psi_0,\qquad
\underset{n\rightarrow\infty}{\lim}\frac{1}{\Phi(x_n,y_n,z_n)}=\phi_0,\\
\qquad\qquad\qquad\underset{n\rightarrow\infty}{\lim}\frac{s^2(x_n,y_n,z_n)}{(s_1-s(x_n,y_n,z_n))^2}=s_2.
\end{array}
\end{equation}

The condition $(x_n,y_n,z_n)\in X^{+3}$ and \eqref{2:eq2.15} imply
the equality
$$
\Psi(x_n,y_n,z_n)=\frac{d(x_n,z_n)}{d(y_n,z_n)}
$$
and the membership relations
$$
\frac{d(x_n,z_n)}{d(x_n,y_n)},\frac{d(x_n,y_n)}{d(y_n,z_n)}\in\left[1,\frac{d(x_n,z_n)}{d(y_n,z_n)}\right].
$$
Consequently, replacing $\mathbb{N}$ by a suitable subset, we may
suppose that there are finite constant $\psi_1,\psi_2$ such that
\begin{equation}\label{3:eq3.6}
\underset{n\rightarrow\infty}{\lim}\frac{d(x_n,z_n)}{d(x_n,y_n)}:=\psi_1,\quad
\underset{n\rightarrow\infty}{\lim}\frac{d(x_n,y_n)}{d(y_n,z_n)}:=\psi_2
\end{equation}
and $\psi_1\psi_2=\psi_0$ and $\psi_1\wedge\psi_2\geqslant 1$.

Using \eqref{3:eq3.5} and \eqref{3:eq3.6} and repeating the
arguments from the first part of the proof of Theorem
\ref{2:T2.10} we find a normalizing sequence
$\tilde{r}=\{r_n\}_{n\in\mathbb{N}}$ such that there are finite
limits
\begin{equation}\label{3:eq3.7}
\begin{array}{l}
\tilde{d}(\tilde{p},\tilde{x})=\underset{n\rightarrow\infty}{\lim}\frac{d(x_n,p)}{r_n},\quad
\tilde{d}(\tilde{p},\tilde{y})=\underset{n\rightarrow\infty}{\lim}\frac{d(y_n,p)}{r_n},\quad
\tilde{d}(\tilde{p},\tilde{z})=\underset{n\rightarrow\infty}{\lim}\frac{d(z_n,p)}{r_n},\\
\tilde{d}(\tilde{x},\tilde{y})=\underset{n\rightarrow\infty}{\lim}\frac{d(x_n,y_n)}{r_n},\quad
\tilde{d}(\tilde{x},\tilde{z})=\underset{n\rightarrow\infty}{\lim}\frac{d(x_n,z_n)}{r_n},\quad
\tilde{d}(\tilde{y},\tilde{z})=\underset{n\rightarrow\infty}{\lim}\frac{d(y_n,z_n)}{r_n}
\end{array}
\end{equation}
and, in addition, the inequalities
\begin{equation}\label{3:eq3.8}
\tilde{d}(\tilde{x},\tilde{z})\geqslant\tilde{d}(\tilde{x},\tilde{y})
\geqslant\tilde{d}(\tilde{y},\tilde{z})>0
\end{equation}
hold. Note that \eqref{3:eq3.7} is similar to \eqref{2:eq2.23} and
\eqref{3:eq3.8} can be obtained as \eqref{2:eq2.24} ---
\eqref{2:eq2.25}.

Let us consider now the limit relation
\begin{equation}\label{3:eq3.9}
\underset{n\rightarrow\infty}{\lim}\frac{s^2(x_n,y_n,z_n)}{(s_1-s(x_n,y_n,z_n))^2}=s_2.
\end{equation}
We first establish that $s_2\neq 1$. As usual, replacing
$\mathbb{N}$ by a suitable infinite subset, we may suppose that
there is the limit
\begin{equation}\label{3:eq3.10}
\underset{n\rightarrow\infty}{\lim}s(x_n,y_n,z_n)=s_0\in[1,\infty].
\end{equation}
If $s_0=+\infty$, then, as in the proof of Proposition
\ref{2:as2.5}, we can obtain the equality
\begin{equation}\label{3:eq3.10*}
\tilde{d}(\tilde{x},\tilde{z})=\tilde{d}(\tilde{x},\tilde{y})\vee
\tilde{d}(\tilde{y},\tilde{z}).
\end{equation}
Furthermore, it follows from \eqref{3:eq3.2} that
\begin{equation}\label{3:eq3.11}
\left(\tilde{d}(\tilde{x},\tilde{z})\right)^{s_1}=
\left(\tilde{d}(\tilde{x},\tilde{y})\right)^{s_1}+
\left(\tilde{d}(\tilde{y},\tilde{z})\right)^{s_1}.
\end{equation}
Equalities \eqref{3:eq3.10*} and \eqref{3:eq3.11} imply the equality
$\tilde{d}(\tilde{y},\tilde{z})=0$, contrary to \eqref{3:eq3.8}.
Hence the limit in \eqref{3:eq3.10} is finite. If $s_2=1$, then
using \eqref{3:eq3.9} we have
$$
\frac{s^2_0}{(s_0-s_1)^2}=1,
$$
that is $s_0=s_1/2$. From this we obtain
\begin{equation}\label{3:eq3.12}
\left(\tilde{d}(\tilde{x},\tilde{z})\right)^{\frac{s_1}{2}}=
\left(\tilde{d}(\tilde{x},\tilde{y})\right)^{\frac{s_1}{2}}+
\left(\tilde{d}(\tilde{y},\tilde{z})\right)^{\frac{s_1}{2}}
\end{equation}
in the same manner as in the case $s_0=\infty$. It contradicts
\eqref{3:eq3.11} because the function
\begin{equation}\label{3:eq3.13}
f(s)= \left((\tilde{d}(\tilde{x},\tilde{y})^s+
(\tilde{d}(\tilde{y},\tilde{z}))^s\right)^{\frac{1}{s}},
\end{equation}
is strictly decreasing in $s$. Thus $s_2\neq 1$ as it was indicated.

Let us analyze now the possible value of $s_0$ in \eqref{3:eq3.10}.
It was noted above that $s_0\neq\infty$ because $s_2\neq 1$. Hence
$s_0$ is a root of the equation
$$
\frac{x^2}{(x-s_1)^2}=s_2
$$
where $s_2\in(1,\infty)$. Since $s_2\neq 1$, two possible values
of $s_0$ are
\begin{equation}\label{3:eq3.14}
\frac{s_1\sqrt{s_2}}{1+\sqrt{s_2}}\quad\text{ and
}\quad\frac{-s_1\sqrt{s_2}}{1-\sqrt{s_2}},
\end{equation}
where we put $\sqrt{s_2}>0$. It follows from \eqref{3:eq3.14} that
$s_0\neq s_1$ because if $s_0=s_1$, then
$$
s_1(1+\sqrt{s_2})=s_1\sqrt{s_2}\quad\text{or}\quad
s_1(1-\sqrt{s_2})=-s_1\sqrt{s_2},
$$
i.e., $s_1=0$ which contradicts the conditions of the theorem. As
in \eqref{3:eq3.12} we obtain
$$
\left(\tilde{d}(\tilde{x},\tilde{z})\right)^{s_0}=
\left(\tilde{d}(\tilde{x},\tilde{y})\right)^{s_0}+
\left(\tilde{d}(\tilde{y},\tilde{z})\right)^{s_0}.
$$
The last equality contradicts \eqref{3:eq3.11} because function
\eqref{3:eq3.13} is strictly decreasing on $(0,\infty)$ and
$s_0\neq s_1$.

It follows that the positive constant $s_2$ in \eqref{3:eq3.5} and
\eqref{3:eq3.9} cannot be finite, contrary to the assumption. Thus
the limit relation \eqref{3:eq3.3} holds if all spaces
$(\Omega_{p,\tilde{r}}^X,\rho^{s_1})$ belong to $\mathfrak{M}$.

Suppose now that limit relation \eqref{3:eq3.3} holds. We must prove
that for every pretangent space $\Omega_{p,\tilde{r}}^X$ the
equality
\begin{equation}\label{3:eq3.15}
(\rho(\beta,\delta))^{s_1}=(\rho(\beta,\gamma))^{s_1}+(\rho(\gamma,\delta))^{s_1}
\end{equation}
holds for all $\beta,\gamma,\delta\in\Omega_{p,\tilde{r}}^X$
whenever
\begin{equation}\label{3:eq3.16}
\rho(\beta,\delta)\geqslant\rho(\beta,\gamma)\geqslant\rho(\gamma,\delta).
\end{equation}
Since \eqref{3:eq3.16} is trivial for $\rho(\gamma,\delta)=0$, we
assume
\begin{equation}\label{3:eq3.17}
\rho(\gamma,\delta)>0.
\end{equation}
Note that \eqref{3:eq3.16} together with \eqref{3:eq3.17} are an
equivalent of $(\beta,\gamma,\delta)\in\Omega_{p,\tilde{r}}^{+3}$.

Let $(\beta,\gamma,\delta)\in\Omega_{p,\tilde{r}}^{+3}$ is given
and let
\begin{equation}\label{3:eq3.18}
\tilde{x}^*=\{x^*_n\}_{n\in\mathbb{N}}\in\beta,\quad
\tilde{y}^*=\{y^*_n\}_{n\in\mathbb{N}}\in\gamma,\quad
\tilde{z}^*=\{z^*_n\}_{n\in\mathbb{N}}\in\delta.
\end{equation}
At least one of the six rearrangements of the points
$x^*_n,y^*_n,z^*_n$ belongs to $X^{+3}$ for $n$ belonging to some
infinite subsequence $\{n_k\}_{k\in\mathbb{N}}$ of natural numbers.
We denote by $x_n$ the first element of this rearrangement, by $y_n$
the second and by $z_n$ the third one. Using, as usual,
$\{n_k\}_{k\in\mathbb{N}}$ instead of  the sequence of all natural
numbers we may suppose that $(x_k,y_k,z_k)\in X^{+3}$ for each
$k\in\mathbb{N}$. Write $\tilde{x}:=\{x_k\}_{k\in\mathbb{N}}$,
$\tilde{y}:=\{y_k\}_{k\in\mathbb{N}}$,
$\tilde{z}:=\{z_k\}_{k\in\mathbb{N}}$. Relations \eqref{3:eq3.18}
imply the equalities
\begin{equation}\label{3:eq3.19}
\begin{array}{l}
\tilde{d}_{\tilde{r}'}(\tilde{x},\tilde{z})=
\underset{k\rightarrow\infty}{\lim}\frac{d(x_k,z_k)}{r_{n_k}}=\rho(\beta,\delta),\quad
\tilde{d}_{\tilde{r}'}(\tilde{x},\tilde{y})=
\underset{k\rightarrow\infty}{\lim}\frac{d(x_k,y_k)}{r_{n_k}}=\rho(\beta,\gamma),\\
\qquad\qquad\qquad\qquad
\tilde{d}_{\tilde{r}'}(\tilde{y},\tilde{z})=
\underset{k\rightarrow\infty}{\lim}\frac{d(y_k,z_k)}{r_{n_k}}=\rho(\gamma,\delta),
\end{array}
\end{equation}
see Remark \ref{2:r2.6}. Moreover, it follows directly from
definitions of $\tilde{x},\tilde{y},\tilde{z}$ that there are
finite limits
$$
\underset{k\rightarrow\infty}{\lim}\frac{d(x_k,p)}{r_{n_k}},\quad
\underset{k\rightarrow\infty}{\lim}\frac{d(y_k,p)}{r_{n_k}}\quad\text{and}\quad
\underset{k\rightarrow\infty}{\lim}\frac{d(z_k,p)}{r_{n_k}}.
$$
Consequently the family
$\{\tilde{x},\tilde{y},\tilde{z},\tilde{p}'\}$ is self-stable
w.r.t. $\tilde{r}'=\{r_{n_k}\}_{k\in\mathbb{N}}$.

Using \eqref{3:eq3.19} we can rewrite \eqref{3:eq3.15} in the
equivalent form
\begin{equation}\label{3:eq3.20}
\left(\tilde{d}_{\tilde{r}'}(\tilde{x},\tilde{z})\right)^{s_1} =
\left(\tilde{d}_{\tilde{r}'}(\tilde{x},\tilde{y})\right)^{s_1} +
\left(\tilde{d}_{\tilde{r}'}(\tilde{y},\tilde{z})\right)^{s_1}.
\end{equation}
Reasoning as in the proof of \eqref{2:eq2.30} we obtain
$$
\underset{k\rightarrow\infty}{\lim}
\frac{s^2(x_k,y_k,z_k)}{\Phi(x_k,y_k,z_k)(s_1-s(x_k,y_k,z_k))^2}=\infty.
$$
If, in addition, the equality
$$
\underset{k\rightarrow\infty}{\lim}s(x_k,y_k,z_k)=s_1
$$
is true, then, using the first part of the proof of Proposition
\ref{2:as2.5}, we see that \eqref{3:eq3.20} holds. In the opposite
case there is an infinite strictly increasing sequence of natural
numbers $k_m$, $m\in\mathbb{N}$, for which, similarly
\eqref{2:eq2.32}, we obtain
$$
\underset{m\rightarrow\infty}{\lim}
\frac{1}{\Phi(x_{k_m},y_{k_m},z_{k_m})}=\infty.
$$
Now the equality \eqref{3:eq3.20} can be proved by the repetition
of the arguments which stay after \eqref{2:eq2.32}.
\end{proof}

In the following corollary and further we assume that $\mathbb{R}$
is the set of all real numbers with the usual metric
$d(x,y)=|x-y|$.

\begin{corollary}\label{3:c3.4}
Let $(X,d)$ be a metric space with a limit point $p$,
$(\Omega_{p,\tilde{r}}^X,\rho)$ a pretangent space to $X$ at the
point $p$ and $s_1$ a positive number. If \eqref{3:eq3.3} holds
for this $s_1$, then each one from the following conditions is
sufficient that the space $(\Omega_{p,\tilde{r}}^X,\rho)$ be
tangent.
\newline
(i) There is no any isometric embeddings of
$(\Omega_{p,\tilde{r}}^X,\rho^{s_1})$ in $\mathbb{R}$.
\newline
(ii) The space $(\Omega_{p,\tilde{r}}^X,\rho^{s_1})$ is isometric
to $\mathbb{R}$.
\end{corollary}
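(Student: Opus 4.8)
The plan is to reduce tangency to a purely metric rigidity statement and then to settle the two cases separately. First I would note that the hypothesis \eqref{3:eq3.3} involves only $X$, $p$ and $s_1$ and is independent of the normalizing sequence; hence, by Theorem \ref{3:T3.2}, it forces $(\Omega^X_{p,\tilde s},\rho^{s_1})\in\mathfrak M$ for \emph{every} normalizing sequence $\tilde s$, in particular for $\tilde r$ and for each subsequence $\tilde r'$. By Definition \ref{1:d1.4}, $\Omega^X_{p,\tilde r}$ is tangent as soon as the canonical isometric embedding $em'\colon\Omega^X_{p,\tilde r}\to\Omega^X_{p,\tilde r'}$ is onto for every $\tilde r'$; since $em'$ preserves $\rho$ it preserves $\rho^{s_1}$, so it is an isometric embedding of one $\mathfrak M$-space into another and only surjectivity is in question. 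If $em'$ were not onto I would pick $\delta\in\Omega^X_{p,\tilde r'}\setminus em'(\Omega^X_{p,\tilde r})$; as every subspace of an $\mathfrak M$-space lies in $\mathfrak M$, the set $em'(\Omega^X_{p,\tilde r})\cup\{\delta\}$ would exhibit a one-point extension of $(\Omega^X_{p,\tilde r},\rho^{s_1})$ inside $\mathfrak M$. Thus it suffices to prove that, under (i) or (ii), no single point can be adjoined to $(\Omega^X_{p,\tilde r},\rho^{s_1})$ within $\mathfrak M$.

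The common device for both cases is to record what such a $\delta$ would mean. Writing $g(\omega):=\rho^{s_1}(\delta,em'\omega)>0$, the membership of each triple $\{\delta,em'\omega_1,em'\omega_2\}$ in the metric-betweenness class $\mathfrak M$ says exactly that the largest of $g(\omega_1),g(\omega_2),\rho^{s_1}(\omega_1,\omega_2)$ equals the sum of the other two; equivalently $\rho^{s_1}(\omega_1,\omega_2)\in\{\,g(\omega_1)+g(\omega_2),\ |g(\omega_1)-g(\omega_2)|\,\}$ for all $\omega_1,\omega_2$. The whole problem is to show that no strictly positive $g$ with this property exists.

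For case (ii), suppose $(\Omega^X_{p,\tilde r},\rho^{s_1})$ is isometric to $\mathbb R$ via $\iota$ and set $G(t):=g(\iota(t))$. The triangle inequality gives $\bigl|G(t)-|t|\bigr|\le G(0)$, so $G(t)=|t|+O(1)$; substituting this into the alternative of the previous paragraph for two large arguments of the same sign rules out $G(t)+G(t')$ and forces $|G(t)-G(t')|=|t-t'|$, whence $G(t)=|t-c|$ first on each ray and then, after matching the two constants, for all $t$. But then $G(c)=0$, contradicting $g>0$. This asymptotic rigidity of $\mathbb R$ is routine.

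For case (i) I would first convert non-embeddability into a concrete configuration: fixing a base point $o$ and calling $x,y$ \emph{cofacial} when $\rho^{s_1}(x,y)=|\rho^{s_1}(o,x)-\rho^{s_1}(o,y)|$, an isometric embedding $\Omega^X_{p,\tilde r}\hookrightarrow\mathbb R$ exists iff cofaciality is transitive; its failure produces three points whose $o$-radii satisfy $r_b=r_a+r_c$, i.e.\ a four-point subspace (these three together with $o$) isometric to the vertex set $O(\alpha,\gamma)$ of an $\alpha\times\gamma$ rectangle in $(\mathbb R^2,\|\cdot\|_1)$, with $\alpha=r_a$, $\gamma=r_c$. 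Applying the relation of the second paragraph to these four vertices $v_1,\dots,v_4$ turns the question into a finite system: $\|v_i-v_j\|_1\in\{\,g_i+g_j,\ |g_i-g_j|\,\}$ in the four unknowns $g_i:=g(v_i)>0$. The hard part is exactly this last step — a patient run through the finitely many sign-patterns, which I expect to show that every solution forces some $g_i=0$, so that $\delta$ coincides with a vertex already lying in $em'(\Omega^X_{p,\tilde r})$, the contradiction. This rigidity of the $\ell^{1}$-rectangle for all $\alpha,\gamma>0$ is the only genuinely computational point; once it is in hand, $em'$ is onto in both cases and for every admissible $\tilde r'$, so $\Omega^X_{p,\tilde r}$ is tangent.
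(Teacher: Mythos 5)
Your global reduction is sound: \eqref{3:eq3.3} involves only $X$, $p$ and $s_1$, so Theorem \ref{3:T3.2} puts \emph{every} $(\Omega^X_{p,\tilde r'},\rho^{s_1})$ into $\mathfrak{M}$; the class $\mathfrak{M}$ is hereditary (it is a condition on triples); and tangency is exactly surjectivity of every $em'$. So the corollary does reduce to showing that $(\Omega^X_{p,\tilde r},\rho^{s_1})$ admits no one-point extension inside $\mathfrak{M}$ with the new point $\delta$ at positive distance from everything. Your case (ii) argument (the asymptotic rigidity forcing $G(t)=|t-c|$ and hence $G(c)=0$) is correct and genuinely different from the paper's: the paper settles both cases by quoting Menger's Theorem \ref{3:T3.6}, embedding the enlarged space into $\mathbb{R}$ and then observing that a ``sphere'' in $\mathbb{R}$ cannot hold three distinct points. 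Your case (i) reduction is also correct as far as it goes: in an $\mathfrak{M}$-space with base point $o$, non-transitivity of cofaciality yields points $a,b,c$ with $r_b=r_a+r_c$, and $\{o,a,b,c\}$ is then a pseudo-linear quadruple in the sense of \eqref{3:eq3.23}.

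The gap is precisely the step you yourself call ``the hard part'': you never verify that the system $\|v_i-v_j\|_1\in\{g_i+g_j,\,|g_i-g_j|\}$, $g_i>0$, is unsolvable; you only say you ``expect'' the sign-pattern analysis to close. Since that rigidity statement carries the entire weight of case (i) — everything before it is bookkeeping — the proof is incomplete as written. The claim is true, and there are two ways to finish. The cheap way is the paper's: the five points $v_0,\dots,v_3,\delta$ form an $\mathfrak{M}$-space with $\card\geqslant 5$, so by Theorem \ref{3:T3.6} it embeds isometrically into $\mathbb{R}$, hence so does the quadruple $\{v_0,\dots,v_3\}$ — contradiction, no computation needed. (The paper streamlines even further: by Theorem \ref{3:T3.6}, condition (i) forces $\card\Omega_{p,\tilde r}=4$; any proper extension $\Omega_{p,\tilde r'}$ then has $\card\geqslant 5$, embeds into $\mathbb{R}$ by Menger, and composing that embedding with $em'$ embeds $(\Omega^X_{p,\tilde r},\rho^{s_1})$ into $\mathbb{R}$, contradicting (i).) The hands-on way also works but must be written out; it is short: if some diagonal pair is cofacial, say $g_0=g_2+s+t$, then the four side pairs successively force $g_1=g_2+t$ and $g_3=g_2+s$, and the remaining diagonal pair requires $s+t\in\{2g_2+s+t,\,|s-t|\}$, impossible for $g_2,s,t>0$; if instead both diagonals are anti, i.e.\ $g_0+g_2=g_1+g_3=s+t$, then writing $u=g_0+g_1$, $v=g_0-g_1$ the side pairs force either the flat contradiction $v=s=-s$ or $u\in\{s,s+2t\}$ together with $|v|=s$, and each of the latter choices makes some $g_i=0$. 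Either include such a verification or cite Theorem \ref{3:T3.6} at that point; without one of the two, case (i) is not proved.
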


To prove this we will use the following particular case of the
classical Menger's result on the isometric embeddings into
Euclidean spaces.

\begin{theorem}[\it K. Menger]\label{3:T3.6}
Let $Y\in\mathfrak{M}$ be a metric space with $\card Y\geqslant
5$. Then $Y$ is isometric to some subset of $\mathbb{R}$.
\end{theorem}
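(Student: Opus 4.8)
The plan is to build an explicit isometric embedding $\phi\colon Y\to\mathbb{R}$ of ``signed distance'' type, after first reducing its correctness to a purely local, four-point statement. Fix once and for all a point $o\in Y$ and a reference point $u\in Y$ with $u\ne o$ (possible since $\card Y\geqslant 2$). For $x\in Y$ put $\phi(x):=d(o,x)$ if $o$ does not lie between $u$ and $x$, and $\phi(x):=-d(o,x)$ otherwise, with $\phi(o):=0$; the sign depends only on the triple $\{o,u,x\}$, which always embeds into $\mathbb{R}$, so $\phi$ is well defined. To verify that $\phi$ is distance preserving it suffices, for each pair $x,y$, to know that the set $\{o,u,x,y\}$ (of at most four points) is isometric to a subset of $\mathbb{R}$: such an embedding is unique up to an isometry of $\mathbb{R}$, and after normalising it to send $o\mapsto 0$ and $u$ into the positive half-line it must coincide with $\phi$ on $\{o,u,x,y\}$, whence $|\phi(x)-\phi(y)|=d(x,y)$. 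Since sets of at most three points always embed, \textbf{the whole theorem reduces to showing that every four-point subset of $Y$ embeds isometrically into $\mathbb{R}$}, and this is exactly where the hypothesis $\card Y\geqslant 5$ will be spent.

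Next I would record two elementary facts about the betweenness relation \eqref{3:eq3.1} inside a space of class $\mathfrak{M}$: in every triple of distinct points the largest of the three distances is \emph{strictly} largest (otherwise \eqref{3:eq3.1} would force one distance to vanish), so each triple has a unique middle point and is isometric to three points of $\mathbb{R}$. I would then classify four-point subspaces $\{a,b,c,d\}\in\mathfrak{M}$. If the maximal distance, say $d(a,b)$, is attained by a \emph{unique} pair, then for each remaining point the triple $\{a,b,\cdot\}$ has $d(a,b)$ strictly largest, so every other point lies between $a$ and $b$; a short computation (ruling out, via the opposite endpoint, the possibility that an endpoint is the middle of a triple of inner points) shows the points are linearly ordered by distance to $a$, so $x\mapsto d(a,x)$ embeds the set into $\mathbb{R}$. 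If instead the maximum is attained by two pairs, these must be disjoint (two maximal pairs cannot share a vertex, again by strictness), so one gets disjoint pairs $\{a,b\},\{c,d\}$ of common maximal length $m$; writing out the four triples yields $d(a,c)=d(b,d)=:\alpha$, $d(a,d)=d(b,c)=:\beta$ with $\alpha+\beta=m$. Such a ``rectangle'' cannot embed into $\mathbb{R}$, since a subset of the line has a unique pair realising its diameter. Hence a four-point subspace of class $\mathfrak{M}$ embeds into $\mathbb{R}$ if and only if it is \emph{not} of this double-maximum type.

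The heart of the argument, and the main obstacle, is therefore to prove that in $Y\in\mathfrak{M}$ with $\card Y\geqslant 5$ no four-point subset is of double-maximum type. Suppose $\{a,b,c,d\}$ were such a subset and choose a fifth point $w\in Y\setminus\{a,b,c,d\}$; I would split into two cases. If some distance from $w$ exceeds $m$, then by strictness it is the unique diameter of the five points, all of $a,b,c,d$ lie between its endpoints, and the linear-ordering argument embeds all five points, in particular $\{a,b,c,d\}$, into $\mathbb{R}$, a contradiction. If no distance from $w$ exceeds $m$, then $m$ is still the maximum, the triples $\{a,b,w\}$ and $\{c,d,w\}$ have $m$ strictly largest, so $w$ lies between $a,b$ and between $c,d$, i.e. $d(a,w)+d(w,b)=d(c,w)+d(w,d)=m$. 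Feeding these two identities into the four remaining cross triples $\{a,c,w\},\{a,d,w\},\{b,c,w\},\{b,d,w\}$ and imposing \eqref{3:eq3.1} on each is a finite case analysis that forces $w$ to coincide with one of $a,b,c,d$ (geometrically the double-maximum quadruple is the four corners of an $\ell_1$-rectangle, betweenness pins $w$ to its edges, and the four cross conditions jointly to a corner), contradicting $w\notin\{a,b,c,d\}$. This case analysis is the only genuinely computational step and is where I expect the real work to lie.

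Finally I would assemble the pieces: by the previous paragraph every four-point subset of $Y$ is embeddable, so by the reduction in the first paragraph the signed-distance map $\phi$ is an isometry onto its image, which shows that $Y$ is isometric to a subset of $\mathbb{R}$. The bound $\card Y\geqslant 5$ is sharp, since the double-maximum quadruple is itself a four-point space of class $\mathfrak{M}$ that admits no isometric embedding into $\mathbb{R}$.
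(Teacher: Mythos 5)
The paper offers no proof of Theorem~\ref{3:T3.6} to compare against: it is quoted as a particular case of Menger's classical embedding theorem, with the reference \cite{Men}, and the only related material is Remark~\ref{3:r3.7}, which recalls the four-point obstruction (the pseudo-linear quadruple) from \cite[p.~114]{Bl}. So what you have produced is a reconstruction of the classical distance-geometry argument, and its skeleton is correct. The signed-distance reduction is valid (any isometric embedding of a subset of $\mathbb{R}$ into $\mathbb{R}$ is the restriction of a global isometry, so the normalized embedding of $\{o,u,x,y\}$ must agree with $\phi$); the strict maximality of the largest distance in a triple of distinct points of a space of class $\mathfrak{M}$ is correct; the dichotomy for four-point subspaces (unique maximal pair $\Rightarrow$ $x\mapsto d(a,x)$ embeds; two maximal pairs $\Rightarrow$ they are disjoint and the space is exactly a pseudo-linear quadruple as in \eqref{3:eq3.23}, which cannot embed) is correct and matches Remark~\ref{3:r3.7}; and your Case~1 for the fifth point goes through as stated.

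The weak point is exactly the one you flag yourself: in Case~2 you assert, but do not carry out, the case analysis showing that the four cross triples force the fifth point $w$ into a corner of the quadruple. Since everything else is bookkeeping, that assertion \emph{is} the theorem, and as written the proof is incomplete at its crux. Fortunately the computation closes, and cleanly, so the gap is one of execution rather than substance. With $m=d(a,b)=d(c,d)$, $\alpha=d(a,c)=d(b,d)$, $\beta=d(a,d)=d(b,c)$, $\alpha+\beta=m$, $\alpha,\beta>0$, put $p=d(a,w)$, $r=d(c,w)$; your two betweenness identities give $d(b,w)=m-p$, $d(d,w)=m-r$, and $p,r\in(0,m)$ because $w\notin\{a,b,c,d\}$. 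Applying \eqref{3:eq3.1} to a cross triple (the largest distance equals the sum of the other two) always yields one of only four equations
\[
E_1:\; p+r=\alpha,\qquad E_2:\; p-r=\alpha,\qquad E_3:\; r-p=\alpha,\qquad E_4:\; p+r=m+\beta;
\]
namely, $\{a,c,w\}$ gives $E_1$, $E_2$ or $E_3$; $\{b,d,w\}$ gives $E_2$, $E_3$ or $E_4$; $\{a,d,w\}$ gives $E_1$, $E_3$ or $E_4$; $\{b,c,w\}$ gives $E_1$, $E_2$ or $E_4$ (using $m-\beta=\alpha$ repeatedly). No two of the $E_i$ can hold simultaneously: $E_1\wedge E_2$ forces $r=0$, $E_1\wedge E_3$ forces $p=0$, $E_2\wedge E_4$ forces $p=m$, $E_3\wedge E_4$ forces $r=m$, all contradicting $p,r\in(0,m)$, while $E_1\wedge E_4$ forces $\beta=0$ and $E_2\wedge E_3$ forces $\alpha=0$. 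Hence exactly one $E_i$ holds; but the four cross triples omit, respectively, $E_4$, $E_1$, $E_2$, $E_3$ from their admissible lists, so the triple whose omitted equation is the one that holds satisfies none of its three alternatives, violating \eqref{3:eq3.1} and hence $Y\in\mathfrak{M}$. That is the desired contradiction; with this paragraph inserted, your proof is complete and is, in substance, the classical one.
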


\begin{proof}[\it Proof of Corollary \ref{3:c3.4}]
Suppose that condition $(i)$ is fulfilled and \eqref{3:eq3.3} is
true. Since \eqref{3:eq3.3} holds, we have, by Theorem \ref{3:T3.2},
that $(\Omega_{p,\tilde{r}}^X,\rho^{s_1})\in\mathfrak{M}$. Using
Menger's theorem \ref{3:T3.6} we see that
\begin{equation}\label{3:eq3.21}
\card\Omega_{p,\tilde{r}}=4.
\end{equation}
If $(\Omega_{p,\tilde{r}}^X,\rho)$ is not tangent, then there is a
pretangent space $\Omega_{p,\tilde{r}'}^X$ such that
\begin{equation}\label{3:eq3.22}
\Omega_{p,\tilde{r}'}\diagdown
em'(\Omega_{p,\tilde{r}})\neq\varnothing
\end{equation}
for the isometric embedding
$em':\Omega_{p,\tilde{r}}^X\rightarrow\Omega_{p,\tilde{r}'}^X$, see
Definition \ref{1:d1.4}. Since
$(\Omega_{p,\tilde{r}'}^X,\rho^{s_1})$ also belongs to
$\mathfrak{M}$, relations \eqref{3:eq3.21} -- \eqref{3:eq3.22} imply
the inequality
$$
\card\Omega_{p,\tilde{r}'}\geqslant 5.
$$
Hence, by Menger's theorem, there exists an isometric embedding
$f$ of $(\Omega_{p,\tilde{r}'}^X,\rho^{s_1})$ into $\mathbb{R}$.
Now the superposition
$\Omega_{p,\tilde{r}}^X\overset{em'}{\rightarrow}\Omega_{p,\tilde{r}'}^X\overset{f}{\rightarrow}\mathbb{R}$
is an isometric embedding of $(\Omega_{p,\tilde{r}}^X,\rho^{s_1})$
in $\mathbb{R}$, contrary to $(i)$.

Let condition $(ii)$ and \eqref{3:eq3.3} be fulfilled. In order
that $(\Omega_{p,\tilde{r}}^X,\rho)$ be tangent, it suffices to
show each isometric embedding $f:\mathbb{R}\rightarrow Y$ is a
bijection if $Y\in\mathfrak{M}$. Suppose an isometric embedding
$f:\mathbb{R}\rightarrow Y$, $Y\in\mathfrak{M}$, is not bijective.
Let $a\in Y\diagdown f(\mathbb{R})$ and let $b\in f(\mathbb{R})$.
Write $s:=\rho(a,b)$ where $\rho$ is the metric on $Y$. Then there
are two distinct points $p_1,p_2\in\mathbb{R}$ for which
$$
|f^{-1}(b)-p_1|=|f^{-1}(b)-p_2|=s.
$$
Consequently we have
$$
\rho(b,f(p_1))=\rho(b,f(p_2))=\rho(b,a)=s>0
$$
where all three points $f(p_1),f(p_2)$ and $b$ are distinct.
Theorem \ref{3:T3.6} implies that $(Y,\rho)$ can be isometrically
embedded in $\mathbb{R}$. Let $g:Y\rightarrow\mathbb{R}$ be a such
embedding. Then the points $g(b)$, $g(f(p_1))$ and $g(f(p_2))$ are
distinct points of the sphere $\{x\in\mathbb{R}:|g(a)-x|=s\}$. It
is impossible because every "sphere" in $\mathbb{R}$ contains only
two points.
\end{proof}

\begin{remark}\label{3:r3.7}
The four-point metric spaces $Y\in\mathfrak{M}$ which cannot be
isometrically embedded in $\mathbb{R}$ are sometimes referred to
as pseudo-linear quadruples. It is well known that $(Y,d)$ is a
pseudo-linear quadruples if and only if the points of $Y$ can be
labelled $p_0,p_1,p_2,p_3$ such that
\begin{equation}\label{3:eq3.23}
\begin{array}{l}
d(p_0,p_1)=d(p_2,p_3)=s,\qquad d(p_1,p_2)=d(p_0,p_3)=t, \\
\qquad\qquad\qquad d(p_0,p_2)=d(p_1,p_3)=s+t
\end{array}
\end{equation}
where $s$ and $t$ are some positive constants. See, for example,
\cite[p. 114]{Bl}.
\end{remark}

Each pseudo-linear quadruples can be easily realized as a subset
of the two-dimensional linear normed space $l^2_{\infty}$ where,
as usual, the norm is
$$
\|(x_1,x_2)\|=|x_1|\vee|x_2|.
$$
To this end we put
\begin{equation}\label{3:eq3.24}
p_0=(0,0), \ \ p_1=(s,s), \ \ p_2=(s+t,s-t), \ \ p_3=(t,-t)
\end{equation}
Simple calculations show that equalities \eqref{3:eq3.23} hold in
this case.

Realization \eqref{3:eq3.24} leads us to the examples of metric
spaces which have the pseudo-linear quadruples as tangent spaces.

\begin{example}\label{3:ex3.7}
Let $\tilde{r}=\{r_n\}_{n\in\mathbb{N}}$ be a decreasing sequence
of positive numbers such that
$\underset{n\rightarrow\infty}{\lim}r_n/r_{n+1}=+\infty$ and let
$t,s\in(0,\infty)$. Write $Y:=\{p_0,p_1,p_2,p_3\}$ where $p_i$,
$i=0,\ldots,3$ are the points of $l^2_{\infty}$ which were defined
by \eqref{3:eq3.24} and set
$$
X=\underset{n\in\mathbb{N}}{\bigcup}r_nY
$$
with $r_nY=\{r_np:p\in Y\}$. Consider the metric space $(X,d)$
with the metric $d$ induced from the space $l^2_{\infty}$. It is
easy to see that the sequences
$\tilde{x}_i:=\{r_np_i\}_{n\in\mathbb{N}}$, $i=0,\ldots,3$, are
mutually stable w.r.t. the normalizing sequence $\tilde{r}$.
Furthermore, it can be shown that there is a unique maximal
self-stable family $\tilde{X}_{p_0,\tilde{r}}$ and that
$$
\card\Omega^X_{p_0,\tilde{r}}=4,\quad p_0=(0,0),
$$
where $\Omega^X_{p_0,\tilde{r}}$ is the metric identification of
$\tilde{X}_{p_0,\tilde{r}}$. Equalities \eqref{3:eq3.23} imply
that $\Omega^X_{p_0,\tilde{r}}$ is isometric to the pseudo-linear
quadruples and, consequently, is tangent by Corollary
\ref{3:c3.4}.
\end{example}

\begin{remark}\label{3:r3.8}
Some details dropped under consideration of the above example can
be easily regenerated from \cite{DM2}.
\end{remark}

The next corollary of Theorem \ref{3:T3.2} clarifies "the
geometrical sense"  of the factor $\Phi(x,y,z)$ in limit relation
\eqref{3:eq3.3}.

\begin{corollary}\label{3:c3.9}
Let $(X,d)$ be a metric space and let $p$ be a limit point of $X$.
The following statements are equivalent:
\newline
(i) $(\Omega_{p,\tilde{r}},\rho^{s_1})\in\mathfrak{M}$ for every
pretangent $(\Omega_{p,\tilde{r}},\rho)$ and all
$s_1\in(0,\infty)$;
\newline
(ii) $\card\Omega_{p,\tilde{r}}\leqslant 2$ for every pretangent
$(\Omega_{p,\tilde{r}},\rho)$;
\newline
(iii) $\underset{x,y\rightarrow p}{\lim}F(x,y)=0$ where the
function $F$ was defined by \eqref{2:eq2.13};
\newline
(iv) $\underset{\underset{(x,y,z)\in X^{+3}}{x,y,z\rightarrow
p}}{\lim}\Phi(x,y,z)=0$ where the function $\Phi$ was defined by
\eqref{2:eq2.14}.
\end{corollary}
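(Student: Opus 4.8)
The plan is to close the cycle of implications $(i)\Rightarrow(ii)\Rightarrow(iii)\Rightarrow(iv)\Rightarrow(i)$, exploiting that two of these links are essentially free. First, $(i)\Leftrightarrow(ii)$ is just Remark \ref{3:r3.3} read over all pretangent spaces at once: for a \emph{fixed} $\Omega_{p,\tilde r}$ the relation \eqref{3:eq3.2} holds for every $s_1>0$ precisely when $\card\Omega_{p,\tilde r}\leqslant 2$, and quantifying this over all $\Omega_{p,\tilde r}$ turns $(i)$ into $(ii)$. Second, $(iii)\Rightarrow(iv)$ is immediate from the definition \eqref{2:eq2.14} of $\Phi$ as a maximum of three values of $F$: if $F(x,y)\to 0$ as $x,y\to p$, then so does $\Phi(x,y,z)=F(x,y)\vee F(x,z)\vee F(y,z)$ on $X^{+3}$. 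In fact $(iii)$ and $(iv)$ are equivalent for a transparent reason worth recording: given a pair $x,y$ near $p$ with $x,y\neq p$ and $x\neq y$, the triple $(x,y,p)$, suitably reordered, lies in $X^{+3}$, and since $F(\cdot,p)\equiv 0$ by \eqref{2:eq2.13} we get $\Phi(x,y,p)=F(x,y)$; letting the pair tend to $p$ also yields $(iv)\Rightarrow(iii)$, so adjoining the third vertex $p$ is exactly what converts the two-point quantity $F$ into the three-point quantity $\Phi$.

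Next I would handle the analytic bridge $(iv)\Rightarrow(i)$. The point is that the troublesome factor in \eqref{3:eq3.3} is bounded away from $0$ uniformly in $s$: by \eqref{3:eq3.4} we have $\frac{s^2(x,y,z)}{(s_1-s(x,y,z))^2}\geqslant c(s_1):=1\wedge\frac{1}{(1-s_1)^2}>0$ for every $s_1\in(0,\infty)$, while $\Psi\geqslant 1$ always. Hence on $X^{+3}$
$$
\frac{\Psi(x,y,z)\,s^2(x,y,z)}{\Phi(x,y,z)(s_1-s(x,y,z))^2}\ \geqslant\ \frac{c(s_1)}{\Phi(x,y,z)},
$$
and if $(iv)$ holds, i.e. $\Phi\to 0$, the right-hand side tends to $+\infty$. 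Thus \eqref{3:eq3.3} holds for every $s_1$, which is $(i)$.

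The main obstacle is the remaining link $(ii)\Rightarrow(iii)$, which I would prove by contraposition, re-using the construction from the first half of the proof of Theorem \ref{2:T2.10}. Assume $(iii)$ fails: there are $\varepsilon>0$ and points $x_n,y_n\to p$ with $F(x_n,y_n)\geqslant\varepsilon$; then $x_n,y_n,p$ are pairwise distinct and $d(x_n,p)>0$. After relabelling on a subsequence I may assume $d(x_n,p)\geqslant d(y_n,p)$, so that with $r_n:=d(x_n,p)$ one has $F(x_n,y_n)=\frac{d(x_n,y_n)}{r_n}\cdot\frac{d(y_n,p)}{r_n}$. Since $r_n\to 0$ and the ratios $\frac{d(y_n,p)}{r_n}\leqslant 1$, $\frac{d(x_n,y_n)}{r_n}\leqslant 2$ are bounded, I pass to a further subsequence along which they converge, to $b$ and $a$ say, with $ab\geqslant\varepsilon$, so $a,b>0$. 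Then $\tilde x=\{x_n\}$, $\tilde y=\{y_n\}$ and $\tilde p$ are mutually stable w.r.t. the normalizing sequence $\tilde r=\{r_n\}$, with $\tilde d(\tilde x,\tilde y)=a>0$, $\tilde d(\tilde x,\tilde p)=1>0$, $\tilde d(\tilde y,\tilde p)=b>0$. Extending $\{\tilde p,\tilde x,\tilde y\}$ to a maximal self-stable family by Zorn's lemma (as in Proposition \ref{1:p1.2}) produces a pretangent space $\Omega_{p,\tilde r}$ in which the classes of $\tilde p,\tilde x,\tilde y$ are pairwise distinct, whence $\card\Omega_{p,\tilde r}\geqslant 3$, contradicting $(ii)$. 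The only delicate points are verifying that the three ratios $\frac{d(x_n,p)}{r_n}$, $\frac{d(y_n,p)}{r_n}$, $\frac{d(x_n,y_n)}{r_n}$ stay bounded, so that a single subsequence makes $\{\tilde p,\tilde x,\tilde y\}$ self-stable, and that $r_n\to 0$ with $r_n>0$; both are routine once $d(x_n,p)\geqslant d(y_n,p)$ has been arranged. This closes the cycle and establishes the equivalence of $(i)$--$(iv)$.
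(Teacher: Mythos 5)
Your proof is correct, and it follows the same cycle $(i)\Rightarrow(ii)\Rightarrow(iii)\Rightarrow(iv)\Rightarrow(i)$ as the paper, with essentially the same arguments on three of the four links: $(i)\Leftrightarrow(ii)$ via Remark \ref{3:r3.3} (the paper invokes the definition of $\mathfrak{M}$ together with Remark \ref{2:r2.8}, which is precisely what underlies that remark), $(iii)\Rightarrow(iv)$ straight from the definition of $\Phi$, and $(iv)\Rightarrow(i)$ from the lower bound \eqref{3:eq3.4} combined with $\Psi\geqslant 1$ and Theorem \ref{3:T3.2}. The genuine difference is the link $(ii)\Rightarrow(iii)$: the paper does not prove it, citing \cite{DM2} instead, whereas you prove it by contraposition with an explicit construction --- from $F(x_n,y_n)\geqslant\varepsilon$ and $x_n,y_n\rightarrow p$ you deduce that $p,x_n,y_n$ are pairwise distinct, arrange $d(x_n,p)\geqslant d(y_n,p)$, normalize by $r_n:=d(x_n,p)$, extract a subsequence along which $\frac{d(x_n,y_n)}{r_n}\rightarrow a$ and $\frac{d(y_n,p)}{r_n}\rightarrow b$ with $ab\geqslant\varepsilon$ (so $a,b>0$), and extend the self-stable family $\{\tilde{p},\tilde{x},\tilde{y}\}$ by Zorn's lemma to obtain a pretangent space with at least three distinct points, contradicting $(ii)$. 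This argument is sound: the boundedness checks $\frac{d(y_n,p)}{r_n}\leqslant 1$ and $\frac{d(x_n,y_n)}{r_n}\leqslant 2$ are exactly what is needed for mutual stability, and the positivity of all three limit distances is what separates the three equivalence classes. What your route buys is a self-contained proof of the corollary (plus, via the observation $\Phi(x,y,p)=F(x,y)$, a transparent two-sided equivalence of $(iii)$ and $(iv)$); what the paper's citation buys is brevity, since that implication is a result already established in \cite{DM2}.
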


\begin{proof}
$(i)\Rightarrow(ii)$. It follows from the definition of
$\mathfrak{M}$ and from Remark \ref{2:r2.8}.

$(ii)\Rightarrow(iii)$ was proved in \cite{DM2}.

$(iii)\Rightarrow(iv)$ is immediate from the definitions of the
functions $\Phi$ and $F$.

$(iv)\Rightarrow(i)$. To prove this note that $(iv)$ implies
\eqref{3:eq3.3} because the values of
$$
\frac{\Psi(x,y,z)s^2(x,y,z)}{(s_1-s(x,y,z))^2}
$$
are bounded away from zero. Thus (i) follows by Theorem
\ref{3:T3.2}.
\end{proof}

{\bf Acknowledgment.} The first author was partially supported by
the State Foundation for Basic Researches of Ukraine, Grant $\Phi$
25.1/055.

\newpage
{\bf Oleksiy Dovgoshey}
\\
Institute of Applied Mathematics and Mechanics of NASU, R.Luxemburg
str. 74, Donetsk 83114, Ukraine.
\\
{\bf E-mail:} aleksdov@mail.ru.
\bigskip

{\bf Dmytro Dordovskyi}
\\
Institute of Applied Mathematics and Mechanics of NASU, R.Luxemburg
str. 74, Donetsk 83114, Ukraine.
\\
{\bf E-mail:} dordovskydmitry@gmail.com.
\end{document}